\newcommand{\Mod}[1]{\ (\mathrm{mod}\ #1)}
\numberwithin{equation} {section}
\DeclarePairedDelimiter\floor{\lfloor}{\rfloor}
\newtheorem{Definition}{Definition}
\numberwithin{Theorem}{section}
\numberwithin{Definition}{section}
\numberwithin{Lemma}{section}
\numberwithin{Algorithm}{section}
\numberwithin{equation}{section}
\theoremstyle{plain}
\newtheorem{theorem}{Theorem}[section]
\newtheorem{corollary}[theorem]{Corollary}
\newtheorem{lemma}{Lemma}[section]
\theoremstyle{definition}
\theoremstyle{remark}
\author[$\dagger$\footnote{Corresponding author}]{\textbf{Graeme Auld} }
\author[$\dagger$]{\textbf{Ioannis Papastathopoulos}}
\affil[$\dagger$]{\small School of Mathematics and Maxwell Institute,
  University of Edinburgh, Edinburgh, EH9 3FD}
\affil[$$]{\small
  G.R.Auld@sms.ed.ac.uk
  $\quad$ i.papastathopoulos@ed.ac.uk} 
\date{}
\begin{document}
\title{\textbf{Extremal clustering in non-stationary random sequences}}

\maketitle

\begin{abstract}
It is well known that the distribution of extreme values of strictly stationary sequences differ from those of independent and identically distributed sequences in that extremal clustering
may occur. Here we consider non-stationary but identically distributed sequences of random variables subject to suitable long range dependence restrictions.  
We find that the limiting distribution of appropriately normalized sample maxima depends on a parameter that measures the average extremal clustering of the sequence. Based on this new representation we derive the asymptotic distribution for the time between consecutive extreme observations and construct moment and likelihood based estimators for measures of extremal clustering.\    
We specialize our results to random sequences with periodic dependence structure.
\end{abstract} {\bf Keywords:} Clustering of extremes, extremal index,
interexceedance times, intervals estimator, non-stationary sequences, periodic processes.
\section{Introduction} \label{Intro}
\label{sec:intro}

Extreme value theory for strictly stationary sequences has been extensively studied, initiated in the works of \cite{watson54}, \cite{berman64}, \cite{loynes65}, 
and continued by \cite{lead74, lead83} and \cite{O'Brien87} amongst others. 
One of the key findings in this line of research is that unlike in independent and identically distributed sequences where extreme values tend to occur in isolation, stationary sequences possess an intrinsic potential for clustering of extremes, i.e.,\ several successive or close extreme values may be observed.  Understanding the extremal clustering characteristics of a stochastic process is critical in many applications where a cluster of extreme values may have serious consequences.  For example, if a sequence consists of daily temperatures at some fixed location then a cluster of extremes may correspond to a heatwave.  

The extent to which extremal clustering may occur is naturally measured, for strictly stationary sequences, by a parameter known as the extremal index.
Let $\{X_n\}_{n=1}^{\infty}$ be a sequence of random variables with common marginal distribution function $F$, and let $\bar{F}=1-F$ and
$M_n = \text{max}\{X_1,\ldots,X_n\}$. Also, let $\{x_n\}_{n=1}^{\infty}$ be a sequence of real numbers that we may informally think of as thresholds or levels.  
In the special case that $X_i$ and $X_j$ are independent, $i\neq j$, then a necessary and sufficient condition for $\mathbb{P}(M_n \leq x_n)$ to converge to a limit in $(0,1)$ as $n\to\infty$ is that
$n\bar{F}(x_n) \to \tau > 0,$ in which case $\mathbb{P}(M_n \leq x_n) \to e^{-\tau}$ \citep[Theorem 1.5.1]{leadling83}.  More generally, if $\{X_n\}_{n=1}^{\infty}$
is a strictly stationary sequence, then $n\bar{F}(x_n) \to \tau$ is not sufficient to ensure the convergence of $\mathbb{P}(M_n \leq x_n)$. However, in most cases of practical interest,
provided that a suitable long range dependence restriction is satisfied, such as condition $D$ of \cite{lead74}, one has $\mathbb{P}(M_n \leq x_n) \to e^{-\theta\tau}$
where $\theta \in [0,1]$ is the extremal index. \cite{lead83} showed that exceedances of the level $x_n$ occur in clusters with the limiting mean cluster size being equal to $\theta^{-1}$, and \cite{hsing87} showed that distinct clusters may be considered independent in the limit.     

Another characterization of $\theta$ that links it to the extremal clustering properties of a strictly stationary sequence can be found in \cite{O'Brien87}. Defining $M_{j,k} = \text{max}\{X_i  : j+1\leq i \leq k \}$, 
it was shown that the distribution function of $M_n$ satisfies
\begin{equation}
\mathbb{P}(M_n \leq x_n) - F(x_n)^{n\theta_n} \to 0, \quad \text{as } n\to \infty,  \label{eq1.1}
\end{equation}
where
\begin{equation}
\theta_n = \mathbb{P}(M_{1,p_n} \leq x_n \mid X_1 > x_n),    \label{eq1.2}
\end{equation}
 for some $p_n = o(n)$, and provided the limit exists, $\theta_n \to \theta$ as $n \to \infty$.  This result illustrates that smaller values of $\theta$ are indicative of a larger degree of extremal clustering, since the conditional probability in (\ref{eq1.2}) is small when an exceedance of a large threshold is likely to soon be followed by another exceedance. 

Early attempts at estimating $\theta$ were based on associating $\theta^{-1}$ with the limiting mean cluster size.  Different methods for identifying clusters gave rise to  different estimators, well known examples being the runs and blocks estimators \citep{smithweiss94}.  For the runs estimator, a cluster is identified as being initialized when a large threshold is exceeded and ends when a fixed number, known as the run length, of non-exceedances occur. The extremal index is then estimated by the ratio of the number of identified clusters to the total number of exceedances.  A difficulty that arises when using this estimator is its sensitivity to the choice of run length \citep{hsing91}.  

The problem of cluster identification was studied by \cite{ferrseg03} who considered the distribution of the time between two exceedances of a large threshold.  They found that the limiting distribution of appropriately normalized interexceedance times converges to a distribution that is indexed by $\theta$. In particular, for a given threshold $u \in \mathbb{R}$, they define the random variable $T(u) = \text{min}\{n \geq 1 : X_{n+1} > u \mid X_1 > u \}$, and found that as $n\to \infty,$ $\bar{F}(x_n)T(x_n)$ converges in distribution to a mixture of a point mass at zero and an exponential distribution with mean $\theta^{-1}$. 
Thus, by computing theoretical moments of this limiting distribution and comparing them with their empirical counterparts, they construct their so-called intervals estimator.

Motivated by the fact that many real world processes are non-stationary, in this paper we investigate the effect of non-stationarity on extremal 
clustering.\ Previous statistical works that consider extremal clustering in non-stationary sequences include \cite{suv07}, who used the likelihood function introduced by \cite{ferrseg03} for the extremal index together with smoothing methods to capture non-stationarity in a time series of temperature measurements. In a similar application, \cite{colestawnsmith97} used a Markov model together with simulation techniques to estimate the extremal index within different months.

An early work that developed extreme value theory for non-stationary sequences with a common marginal distribution is \cite{husl83}, which focused on the asymptotic distribution of the sample maxima but did not consider extremal clustering. 
\cite{husl86} considered the more general case where the margins may differ and also discussed the difficulty of defining the extremal index for general non-stationary sequences.

Here, we consider a sequence of random variables $\{X_n\}_{n=1}^{\infty}$ with common marginal distribution function $F$, but do not assume stationarity in either the weak or strict sense. As we assume common margins, non-stationarity may arise through changes in the dependence structure. We show, under  assumptions similar to  \cite{O'Brien87}, that
\begin{equation}
\mathbb{P}(M_n \leq x_n) - F(x_n)^{n\gamma_n} \to 0, \quad  \text{as } n\to \infty,    \label{eq1.3}
\end{equation}
where 
\begin{equation}
\gamma_n = \frac{1}{n} \sum_{j=1}^{n} \mathbb{P}(M_{j,j+p_n} \leq x_n \mid X_j> x_n).   \label{eq1.4} 
\end{equation}
Thus, we find that the limiting distribution of the sample maximum at large thresholds is characterized by a parameter $\gamma  = \lim_{n \to \infty} \gamma_n$, provided the limit exists, which by analogy with equation (\ref{eq1.2}), may be regarded as the average of local extremal indices. In this paper we develop methods for estimating these local extremal indices by adapting the methods of \cite{ferrseg03} for the extremal index to our non-stationary setting. In the special case that the sequence is stationary, so that all terms in the summation (\ref{eq1.4}) are equal, the formula for $\gamma_n$ reduces to $\theta_n$ in (\ref{eq1.2}). 

The structure of the paper is as follows.\ Section \ref{Sec2} defines the notation and assumed mixing condition used throughout the paper and states the main theoretical results regarding the asymptotic distribution of the sample maxima and normalized interexceedance times.  Section \ref{Sec3} discusses approaches to parameter estimation using the result from Section \ref{Sec2} on the distribution of the 
interexceedance times.\  Section \ref{Sec4} considers the estimation problem for two simple non-stationary Markov sequences with periodic dependence structures and Section \ref{SecProofs} gives the proofs of the main theoretical results. 

\section{Theoretical results}    \label{Sec2}
 
\subsection{Notation, definitions and preliminary results}   \label{Sec2.1} 

Throughout the paper, when not explicitly stated otherwise, all limits should be interpreted as ``as $n\to\infty$''.
We assume that all random variables in the sequence $\{X_n\}_{n=1}^{\infty}$ have common marginal distribution $F$ with upper endpoint $x_F = \text{sup}\{x\in \mathbb{R} : F(x) < 1 \}$, though we do not assume stationarity.  
In addition to the definitions for $M_n$ and $M_{j,k}$ given in the Section \ref{Intro}, we define $M(A) = \text{max}\{ X_i : i\in A \}$ where $A$ is an arbitrary set of positive integers, and write $ | A\, |$ for the number of elements in $A$.
We also refer to a set of consecutive integers as an interval.  
If $I_1$ and $I_2$ are two intervals, we say that $I_1$ and $I_2$ are separated by $q$ if min($I_2$) - max($I_1$) = $q+1$ 
or min($I_1$) - max($I_2$) = $q+1$, i.e., there are $q$ intermediate values between $I_1$ and $I_2$.
The set $\{1,2,3,\ldots\}$ is denoted by $\mathbb{N}$.   
Equality in distribution of two random variables $X$ and $Y$ is denoted by  $ X \overset{D}{=} Y.$

We assume that the sequence $\{X_n\}_{n=1}^{\infty}$ satisfies the asymptotic independence of maxima (AIM) mixing condition of \cite{O'Brien87} which restricts long range dependence.
\begin{Definition} \label{Defn1} \normalfont The sequence $\{X_n\}_{n=1}^{\infty}$ is said to satisfy the asymptotic independence of maxima condition relative to the sequence $x_n$ of real numbers, abbreviated to ``$\{X_n\}_{n=1}^{\infty}$ satisfies \text{AIM}($x_n$)'', if there exists a sequence $q_n$ of positive integers with $q_n = o(n)$ such that for any two intervals $I_1 = \{i_1,\ldots, i_j \}$ and $I_2 = \{i_j + q_n + 1,\ldots, i_j + q_n + k \}$ separated by $q_n,$ we have 
\begin{equation}
\alpha_n = \text{max}\, \mid\mathbb{P}\big(M(I_1 \cup I_2) \leq x_n \big) - \mathbb{P}\big(M(I_1) \leq x_n\big)\mathbb{P}\big(M(I_2) \leq x_n\big)\mid \rightarrow 0,  \label{AIMalpha}
\end{equation}
where the maximum is taken over all positive integers $i_1, i_j$ and $k$ such that $|I_1\,| \geq q_n$, $|I_2\,| \geq q_n$ and $i_j + q_n + k  \leq n$.
\end{Definition}
Definition \ref{Defn1} states a slightly weaker condition than the widely used $D(x_n$) condition \citep{lead83} in that only certain intervals $I_1$ and $I_2$ need to be considered in
(\ref{AIMalpha}) rather than arbitrary sets of integers, so that all examples in the literature of sequences satisfying 
$D(x_n$) also satisfy AIM($x_n$).  For example, stationary Gaussian sequences with autocorrelation function $\rho_n$ satisfying Berman's condition, 
$\rho_n\text{log}\,n\to 0$ \citep{berman64}, satisfy AIM($x_n$) for any sequence $x_n$ such that $n\bar{F}(x_n)$ is bounded and any $q_n=o(n)$ \citep[Lemma 4.4.1]{leadling83}. 
The analogous result for non-stationary Gaussian sequences is given in \cite{husl83}, where Berman's condition is replaced by $r_n\text{log}\,n \to 0$ with $r_n = \text{sup}\{|\rho(i,j)| : |i-j| \geq n\}$ and $\rho(i,j)$ the correlation between $X_i$ and $X_j$.

\cite{O'Brien87} showed that if $\{X_n\}_{n=1}^{\infty}$ is a stationary positive Harris Markov sequence with separable state space $S$ and $f:S\to \mathbb{R}$ is a measurable function then the sequence $Y_n = f(X_n)$ satisfies AIM($x_n$) for any $x_n$ and $q_n=o(n)$ with $q_n \to \infty$.

We note that Definition \ref{Defn1} states a property of the dependence structure of the sequence $\{X_n\}_{n=1}^{\infty}$, with the specific marginal distributions playing essentially no role.  In particular, if $\{X_n\}_{n=1}^{\infty}$ satisfies AIM($x_n$) and $g:\mathbb{R}\to \mathbb{R}$ is a monotone increasing function then $Y_n = g(X_n)$ satisfies AIM($g(x_n)$) with the same $q_n$.

The assumption that $\{X_n\}_{n=1}^{\infty}$ satisfies AIM($x_n$) ensures the approximate independence of the block maxima of two sufficiently separated blocks. Lemma \ref{FirstLemma} below provides an upper bound for the degree of dependence of $k$
block maxima for suitably separated blocks and will be useful in Section \ref{Sec2.2} when the limiting behaviour of $\mathbb{P}(M_n \leq x_n)$ is considered. 
\begin{lemma} \label{FirstLemma}
\normalfont  \label{SepLem} Let $\{X_n\}_{n=1}^{\infty}$ satisfy AIM($x_n$) and let $I_1, I_2,\ldots,I_k$ be distinct subintervals of $\{1,2,\ldots, n\}$ where $k\geq 2$ and $|I_i| \geq q_n$, $1\leq i \leq k$.  Suppose that $I_i$ and $I_{i+1}$ are separated by $q_n$ for  $1\leq i \leq k-1$.  Then
\begin{equation}
\big|\mathbb{P}(M(\cup_{i=1}^{k}I_i) \leq x_n) - \prod_{i=1}^{k} \mathbb{P}(M(I_i) \leq x_n) \big| \leq (k-1)\alpha_n + 2(k-2)q_n\bar{F}(x_n).  \label{SepLemEqn}
\end{equation} 
\end{lemma}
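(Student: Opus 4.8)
The plan is to obtain (\ref{SepLemEqn}) by peeling the blocks off one at a time, each time invoking the two‑interval estimate (\ref{AIMalpha}), while separately accounting for the intermediate gap points using only the common marginal law. For $1\le m\le k$ write $J_m=\{\min I_1,\ldots,\max I_m\}$ for the smallest interval containing $I_1\cup\cdots\cup I_m$, so that $J_1=I_1$ and each $J_m$ is a genuine interval with $|J_m|\ge |I_1|\ge q_n$. Since all $X_i$ share the marginal $F$, a union bound gives $\mathbb{P}(M(S)>x_n)\le |S|\,\bar F(x_n)$ for any finite index set $S$, and hence for $S'\subseteq S$,
\begin{equation*}
0\le \mathbb{P}(M(S')\le x_n)-\mathbb{P}(M(S)\le x_n)\le |S\setminus S'|\,\bar F(x_n).
\end{equation*}
This subadditivity bound is the only place the marginal law enters, and it is what produces the $q_n\bar F(x_n)$ terms.

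First I would set up the basic recursion. For $2\le m\le k$ the intervals $J_{m-1}$ and $I_m$ each have length at least $q_n$ and are separated by $q_n$, so (\ref{AIMalpha}) applies and yields
\begin{equation*}
\big|\mathbb{P}(M(J_{m-1}\cup I_m)\le x_n)-\mathbb{P}(M(J_{m-1})\le x_n)\,\mathbb{P}(M(I_m)\le x_n)\big|\le \alpha_n.
\end{equation*}
Because $J_m$ is obtained from $J_{m-1}\cup I_m$ by adjoining exactly the $q_n$ gap points between $I_{m-1}$ and $I_m$, the subadditivity bound lets me replace $M(J_{m-1}\cup I_m)$ by $M(J_m)$ at a cost of $q_n\bar F(x_n)$. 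Abbreviating $a_m=\mathbb{P}(M(J_m)\le x_n)$ and $b_m=\mathbb{P}(M(I_m)\le x_n)$ (so $a_1=b_1$), I obtain $|a_m-a_{m-1}b_m|\le \alpha_n+q_n\bar F(x_n)$ for $2\le m\le k$.

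The main step is a telescoping that keeps the constant sharp. Telescoping the identity $a_{k-1}-\prod_{i=1}^{k-1}b_i=\sum_{m=2}^{k-1}\big(\prod_{i=m+1}^{k-1}b_i\big)(a_m-a_{m-1}b_m)$ and using $0\le b_i\le 1$ with the per-step bound gives
\begin{equation*}
\Big|\mathbb{P}(M(J_{k-1})\le x_n)-\prod_{i=1}^{k-1}b_i\Big|\le (k-2)\big(\alpha_n+q_n\bar F(x_n)\big).
\end{equation*}
I would then attach the final block $I_k$ by a single further application of (\ref{AIMalpha}) to the pair $(J_{k-1},I_k)$ — crucially without filling the gap between $I_{k-1}$ and $I_k$ — which after multiplying the previous bound by $b_k\le 1$ and applying the triangle inequality yields
\begin{equation*}
\Big|\mathbb{P}(M(J_{k-1}\cup I_k)\le x_n)-\prod_{i=1}^{k}b_i\Big|\le (k-1)\alpha_n+(k-2)q_n\bar F(x_n).
\end{equation*}
Finally, since $\cup_{i=1}^k I_i$ is obtained from $J_{k-1}\cup I_k$ by deleting precisely the $(k-2)$ internal gaps (each of $q_n$ points) of $I_1,\ldots,I_{k-1}$, the subadditivity bound converts $M(J_{k-1}\cup I_k)$ to $M(\cup_{i=1}^k I_i)$ at an extra cost of $(k-2)q_n\bar F(x_n)$, giving (\ref{SepLemEqn}).

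The step I expect to be the genuine obstacle is this bookkeeping of the gap points: both a direct induction on $k$ and a telescoping that fills every gap before converting back only at the end produce constants strictly larger than $2(k-2)$ — for instance the fill‑everything variant leaks an extra $2q_n\bar F(x_n)$ and gives $2(k-1)$. The saving to $2(k-2)$ comes from never filling the gap between $I_{k-1}$ and $I_k$: it is invisible to the final use of (\ref{AIMalpha}) and is itself a gap of the target set $\cup_{i=1}^k I_i$, so it is charged on neither side. It then remains only to verify that each invocation of (\ref{AIMalpha}) is legitimate, i.e.\ that $J_{m-1}$ and $I_m$ meet the length and separation hypotheses of Definition \ref{Defn1} and that all indices stay within $\{1,\ldots,n\}$, which is routine.
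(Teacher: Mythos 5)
Your proof is correct and, once the paper's induction is unrolled, it is essentially the same argument: both proofs apply the two-interval AIM bound to the successive pairs $(J_{m-1},I_m)$ with the leading block filled in, and both charge exactly two gap-fillings of cost $q_n\bar{F}(x_n)$ per intermediate step, arriving at $(k-1)\alpha_n+2(k-2)q_n\bar{F}(x_n)$. One correction to your closing remark: a direct induction on $k$ does \emph{not} produce a constant larger than $2(k-2)$ — the paper's proof is exactly such an induction, which at each step fills only the single gap between $I_1$ and $I_2$ to form $J$, paying $q_n\bar{F}(x_n)$ once to pass from $M(\cup_{i=1}^{k+1}I_i)$ to $M(J\cup\cup_{i=3}^{k+1}I_i)$ and once more to pass from $M(J)$ back to $M(I_1\cup I_2)$, i.e.\ $2q_n\bar{F}(x_n)$ per step over $k-2$ steps from the base case $k=2$.
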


\subsection{Asymptotic distribution of $M_n$}   \label{Sec2.2}

In this section we investigate the limiting behaviour of $\mathbb{P}(M_n \leq x_n)$, with the main result being Theorem \ref{MainThm}.  In addition to assuming that $\{X_n\}_{n=1}^{\infty}$ satisfies AIM($x_n$), we will assume that the rate of growth of the sequence $x_n$ is controlled via 
\begin{equation}
n\bar{F}(x_n) \to \tau >0.  \label{ExpExc}  
\end{equation}
In the case of continuous marginal distributions, (\ref{ExpExc}) is immediately satisfied by $x_n = F^{-1}(1-\tau/n)$. 
More generally, Theorem 1.7.13 of \cite{leadling83} guarantees the existence of a sequence $x_n$ satisfying
(\ref{ExpExc}) when $F$ is in the domain of attraction of any of the three classical extreme value distributions \citep[Section 1.2]{dehaanbook}. 

We use the standard technique of block-clipping, see for example Section 10.2.1 in \cite{beiretal04}, to split the interval $\{1,2,\ldots, n\}$ into subintervals, or blocks, of alternating large and small lengths.  Specifically, for sequences $p_n$ and $q_n$ such that $q_n = o(p_n)$ and $p_n = o(n)$ we define
\begin{align}
A_i  &= \big\{(i-1)(p_n+q_n)+1,\ldots, ip_n + (i-1)q_n \,\big \}   \label{A_i}  \\
A_i^* & = \big\{ ip_n + (i-1)q_n + 1, \ldots, i(p_n + q_n) \big\},  \notag
\end{align}
for $i = 1,2,\ldots r_n$, where $r_n =  \floor*{n/(p_n + q_n)}$. 

If we take the sequence $q_n$ appearing in the construction of the blocks $A_i$ and $A_i^*$ to be the same as that in Definition \ref{Defn1}, then Lemma 
\ref{SepLem} bounds the degree of dependence of the collection of random variables $\{M(A_i)\}_{i=1}^{r_n}$, and this allows us to prove 
Lemma \ref{lemOB} below which modifies 
Lemma 3.1 from  \cite{O'Brien87} to allow for non-stationarity.

\begin{lemma} \normalfont  \label{lemOB}
Let $\{X_n\}_{n=1}^{\infty}$ satisfy AIM$(x_n)$ and let the sequence $p_n$ be such that 
\begin{equation}
p_n = o(n), \quad n\alpha_n = o(p_n) \quad \text{and} \quad q_n = o(p_n).   \label{lemOBeq1}
\end{equation}
Then if (\ref{ExpExc}) holds, we have
\begin{equation} 
\mathbb{P}(M_n \leq x_n) - \prod_{i=1}^{r_n} \mathbb{P}(M(A_i) \leq x_n) \rightarrow 0, \label{eq2.5}
\end{equation}
where the intervals $\{A_i\}_{i=1}^{r_n}$ are as in (\ref{A_i}). 
\end{lemma}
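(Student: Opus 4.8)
The plan is to compare both quantities with the intermediate probability $\mathbb{P}(M(\cup_{i=1}^{r_n}A_i) \leq x_n)$ and apply the triangle inequality, so that it suffices to establish the two convergences
\begin{equation*}
\mathbb{P}(M(\cup_{i=1}^{r_n}A_i) \leq x_n) - \mathbb{P}(M_n \leq x_n) \to 0
\end{equation*}
and
\begin{equation*}
\big| \mathbb{P}(M(\cup_{i=1}^{r_n}A_i) \leq x_n) - \textstyle\prod_{i=1}^{r_n}\mathbb{P}(M(A_i) \leq x_n) \big| \to 0.
\end{equation*}
The first convergence controls the error from discarding the short separating blocks $A_i^*$ and the leftover indices, while the second is exactly the approximate-independence error supplied by Lemma \ref{SepLem}.

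For the first convergence, note that $\cup_{i=1}^{r_n}A_i \subseteq \{1,\ldots,n\}$, so $\{M_n \leq x_n\} \subseteq \{M(\cup_{i=1}^{r_n}A_i) \leq x_n\}$ and the difference is nonnegative. On the event that the latter holds but the former fails, some index outside $\cup_{i=1}^{r_n}A_i$ must exceed $x_n$; since the $A_i$ are disjoint with $|A_i| = p_n$, there are $n - r_n p_n$ such indices, and a union bound together with the common margin gives
\begin{equation*}
0 \leq \mathbb{P}(M(\cup_{i=1}^{r_n}A_i) \leq x_n) - \mathbb{P}(M_n \leq x_n) \leq (n - r_n p_n)\,\bar{F}(x_n).
\end{equation*}
Since $r_n(p_n+q_n) \leq n < (r_n+1)(p_n+q_n)$, the count of uncovered indices satisfies $n - r_n p_n \leq r_n q_n + (p_n+q_n)$, the first term being the contribution of the $r_n$ gaps of length $q_n$ and the second the final leftover. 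Bounding $r_n \leq n/p_n$, the right-hand side is at most $\big(q_n/p_n + (p_n+q_n)/n\big)\,n\bar{F}(x_n)$, which tends to $0$ because $q_n = o(p_n)$, $p_n = o(n)$ and $n\bar{F}(x_n)\to\tau$.

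For the second convergence, I would apply Lemma \ref{SepLem} with $k = r_n$ and $I_i = A_i$. The hypotheses hold for all large $n$: by construction consecutive blocks $A_i$ and $A_{i+1}$ are separated by exactly $q_n$, each satisfies $|A_i| = p_n \geq q_n$ since $q_n = o(p_n)$, and $p_n + q_n = o(n)$ forces $r_n \to \infty$ so that $r_n \geq 2$ eventually. The lemma then yields
\begin{equation*}
\big| \mathbb{P}(M(\cup_{i=1}^{r_n}A_i) \leq x_n) - \textstyle\prod_{i=1}^{r_n}\mathbb{P}(M(A_i) \leq x_n) \big| \leq (r_n-1)\alpha_n + 2(r_n-2)q_n\bar{F}(x_n).
\end{equation*}
The first term satisfies $(r_n-1)\alpha_n \leq n\alpha_n/p_n \to 0$ by the assumption $n\alpha_n = o(p_n)$, and the second is again dominated by $r_n q_n \bar{F}(x_n) \to 0$ exactly as in the previous paragraph. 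Combining the two convergences completes the argument.

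No single step is genuinely difficult, so the main obstacle is organisational: one must track precisely which indices are left uncovered by the large blocks and verify that the three rate conditions in (\ref{lemOBeq1}) interact correctly with (\ref{ExpExc}). In particular, $n\alpha_n = o(p_n)$ is exactly the condition that forces the mixing error $(r_n-1)\alpha_n \approx n\alpha_n/p_n$ to vanish, while $q_n = o(p_n)$ is what renders the gap contribution $r_n q_n\bar{F}(x_n) \approx (q_n/p_n)\,n\bar{F}(x_n)$ negligible against the fixed limit $\tau$. The estimates are elementary, but they must be assembled so that every error term is ultimately compared against $n\bar{F}(x_n)$, which remains bounded by virtue of (\ref{ExpExc}).
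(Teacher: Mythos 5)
Your proof is correct and follows essentially the same route as the paper's: compare both quantities to $\mathbb{P}(M(\cup_{i=1}^{r_n}A_i)\leq x_n)$, bound the block-clipping error by $r_nq_n\bar{F}(x_n)$ (you additionally account for the final leftover block of length at most $p_n+q_n$, which the paper glosses over), and invoke Lemma \ref{SepLem} with $k=r_n$ for the factorization error. No issues.
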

\sloppy \textbf{Remarks.} Equation (\ref{eq2.5}) follows easily from (\ref{SepLemEqn}) by making the identification $k=r_n$ and using (\ref{ExpExc}) and (\ref{lemOBeq1}).  Additionally, if $\{X_n\}_{n=1}^{\infty}$ satisfies AIM($x_n$) then we can always find a sequence $p_n$ such that (\ref{lemOBeq1}) holds, for example, by taking $p_n = \floor*{ \{ n\, \text{max}(q_n, n\alpha_n)\}^{1/2} }$.  Thus the only assumption in Lemma \ref{lemOB} beyond common margins is that $\{X_n\}_{n=1}^{\infty}$ satisfies AIM($x_n$) for a sequence $x_n$ satisfying (\ref{ExpExc}). 

We can now state our main theorem.  

\begin{theorem} \label{MainThm}
\normalfont  Under the same assumptions as in Lemma \ref{lemOB}, we have
\begin{equation}
  \mathbb{P}(M_n \leq x_n) - \textnormal{exp}\bigg\{-\sum_{j=1}^{n} \mathbb{P}(X_j > x_n, M_{j,j+p_n} \leq x_n) \bigg\} \rightarrow 0,  \label{result1}
\end{equation}
and consequently
\begin{equation}
\mathbb{P}(M_n \leq x_n) - F(x_n)^{n\gamma_n}  \rightarrow 0,  \label{result2}
\end{equation}
where
\begin{equation}
  \gamma_n =  \frac{1}{n}\sum_{j=1}^{n} \mathbb{P}(M_{j,j+p_n} \leq x_n \mid X_j > x_n).   \label{gamma_n}
\end{equation}
\end{theorem}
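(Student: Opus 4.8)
The plan is to start from Lemma \ref{lemOB}, which already reduces $\mathbb{P}(M_n\le x_n)$ to the product $\prod_{i=1}^{r_n}\mathbb{P}(M(A_i)\le x_n)$ over the long blocks, and to show that this product is asymptotic to $\textnormal{exp}\{-\sum_{j=1}^n \mathbb{P}(X_j>x_n, M_{j,j+p_n}\le x_n)\}$. Since $\mathbb{P}(M(A_i)>x_n)\le p_n\bar{F}(x_n)\to 0$ by a union bound using common margins, and $\sum_i\mathbb{P}(M(A_i)>x_n)\le n\bar{F}(x_n)\to\tau$ is bounded, a first-order expansion of $\log(1-x)$ gives $\prod_{i=1}^{r_n}\mathbb{P}(M(A_i)\le x_n) - \textnormal{exp}\{-\sum_{i=1}^{r_n}\mathbb{P}(M(A_i)>x_n)\}\to 0$, the neglected quadratic remainder being $O(p_n\bar{F}(x_n))\to 0$. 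Hence, writing $B_j=\{X_j>x_n,\,M_{j,j+p_n}\le x_n\}$, equation (\ref{result1}) reduces to showing $\sum_{i=1}^{r_n}\mathbb{P}(M(A_i)>x_n) - \sum_{j=1}^{n}\mathbb{P}(B_j)\to 0$.

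Writing $b_i=\max A_i$ and decomposing $\{M(A_i)>x_n\}$ according to the position of the last exceedance in $A_i$, I would use the exact identity $\mathbb{P}(M(A_i)>x_n)=\sum_{j\in A_i}\mathbb{P}(X_j>x_n,\,M_{j,b_i}\le x_n)$. Each summand differs from the target summand $\mathbb{P}(B_j)$ only in that the block-end window $\{j+1,\ldots,b_i\}$ is replaced by the longer window $\{j+1,\ldots,j+p_n\}$; since $b_i\le j+p_n$, the target summand is the smaller one and the nonnegative discrepancy is $D_j=\mathbb{P}(X_j>x_n,\,M_{j,b_i}\le x_n,\,M(\{b_i+1,\ldots,j+p_n\})>x_n)$. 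Summing the identity over $i$ and comparing with $\sum_{j=1}^n\mathbb{P}(B_j)$, the difference equals $\sum_i\sum_{j\in A_i}D_j$ minus the contribution of those $j$ lying in the short gaps $A_i^*$ or in the final incomplete block. The latter contribution is at most $(r_nq_n+p_n+q_n)\bar{F}(x_n)\to 0$ because $q_n=o(p_n)$ and $p_n=o(n)$, so everything rests on $\sum_i\sum_{j\in A_i}D_j\to 0$.

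I expect this last bound to be the main obstacle, because a crude termwise estimate $\mathbb{P}(X_j>x_n,X_l>x_n)\le\bar{F}(x_n)$ diverges once summed: the clustering of nearby exceedances must be controlled. The key manoeuvre is to sum over $j$ \emph{before} estimating. The events $\{X_j>x_n,M_{j,b_i}\le x_n\}$ are disjoint over $j\in A_i$ (each fixes the last exceedance of $A_i$ at $j$), their union is $\{M(A_i)>x_n\}$, and on each the trailing window is nested in $\{b_i+1,\ldots,b_i+p_n\}$ since $j+p_n\le b_i+p_n$. This collapses the sum to the single-probability bound $\sum_{j\in A_i}D_j\le\mathbb{P}(M(A_i)>x_n,\,M(\{b_i+1,\ldots,b_i+p_n\})>x_n)$, eliminating the spurious per-$j$ blow-up. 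I would then split the trailing window into the gap $A_i^*$ (length $q_n$) and the remainder $W_i=\{b_i+q_n+1,\ldots,b_i+p_n\}$, which is separated from $A_i$ by $q_n$ and has length $\ge q_n$ for large $n$. The gap part contributes at most $\sum_i\mathbb{P}(M(A_i^*)>x_n)\le r_nq_n\bar{F}(x_n)\to 0$, while for the remainder, AIM($x_n$) applied to the $q_n$-separated intervals $A_i$ and $W_i$ yields, via inclusion-exclusion, $\mathbb{P}(M(A_i)>x_n,M(W_i)>x_n)\le\mathbb{P}(M(A_i)>x_n)\mathbb{P}(M(W_i)>x_n)+O(\alpha_n)\le (p_n\bar{F}(x_n))^2+O(\alpha_n)$; summing over $i$ gives $O(p_n\tau^2/n)+O(r_n\alpha_n)=o(1)$, where $r_n\alpha_n=o(1)$ follows from $n\alpha_n=o(p_n)$. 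This establishes (\ref{result1}).

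Finally, (\ref{result2}) follows routinely. Factoring each summand as $\mathbb{P}(B_j)=\bar{F}(x_n)\,\mathbb{P}(M_{j,j+p_n}\le x_n\mid X_j>x_n)$ gives $\sum_{j=1}^n\mathbb{P}(B_j)=n\gamma_n\bar{F}(x_n)$. Since $\log F(x_n)=-\bar{F}(x_n)+O(\bar{F}(x_n)^2)$ and $n\gamma_n\bar{F}(x_n)^2\le\tau^2/n\to 0$ (using $0\le\gamma_n\le 1$), the exponents $n\gamma_n\log F(x_n)$ and $-n\gamma_n\bar{F}(x_n)$ differ by $o(1)$, so that $F(x_n)^{n\gamma_n}-\textnormal{exp}\{-\sum_{j=1}^n\mathbb{P}(B_j)\}\to 0$; combined with (\ref{result1}) this yields (\ref{result2}).
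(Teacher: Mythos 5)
Your proof is correct, but it takes a genuinely different route from the paper's. The paper establishes (\ref{result1}) as two one-sided inequalities: the upper bound is obtained exactly as you would expect (enlarge each block's trailing window from the block end to $j+p_n$, then use $1-x\le e^{-x}$), but the lower bound is handled by passing to subsequences along which $\mathbb{P}(M_n\le x_n)$ converges, introducing a second, coarser blocking scale $s_n$ with $p_n=o(s_n)$, and invoking an auxiliary lemma (Lemma \ref{L5.4}) to show that exceedances in the last $p_n$ positions of each long block contribute negligibly to the summed block-exceedance probabilities; the desired inequality then follows from an inclusion of the ``last exceedance in the block body'' event into a union of the events $\{X_j>x_n, M_{j,j+p_n}\le x_n\}$. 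You instead prove a single two-sided estimate by writing down the exact per-index discrepancy $D_j$ between the last-exceedance decomposition of $\mathbb{P}(M(A_i)>x_n)$ and the target summands, and your key manoeuvre --- summing $D_j$ over $j\in A_i$ using disjointness of the last-exceedance events before estimating, which collapses the sum to the single joint probability $\mathbb{P}(M(A_i)>x_n,\,M(\{b_i+1,\ldots,b_i+p_n\})>x_n)$ --- is exactly what defeats the divergent termwise bound and lets one application of AIM per block finish the job. This buys a shorter, rate-explicit argument with no second blocking scale, no subsequence extraction and no analogue of Lemma \ref{L5.4}, and it isolates precisely where the mixing condition is used (to decouple each long block from its trailing $p_n$-window); the paper's route, by contrast, is a direct adaptation of O'Brien's original stationary argument. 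One small point to tidy up: for the final block $i=r_n$ the window $W_{r_n}$ may extend beyond index $n$, so the AIM bound as literally stated (which requires $\max(I_2)\le n$) does not apply to that pair; but this block's contribution is at most $\mathbb{P}(M(A_{r_n})>x_n)\le p_n\bar F(x_n)\to 0$ on its own, so it can simply be excluded from the AIM step.
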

As it was noted in Section \ref{Intro}, for independent sequences (\ref{ExpExc}) implies that $\mathbb{P}(M_n \leq x_n) \to e^{-\tau}$. 
For a random sequence satisfying the conditions of Lemma \ref{lemOB}, the following result gives a necessary and sufficient condition for the convergence of $\mathbb{P}(M_n \leq x_n).$
  
\begin{corollary}  \label{EasyCor}
\normalfont Under the same assumptions as in Lemma \ref{lemOB}, $\mathbb{P}(M_n \leq x_n)$ converges if and only if $ \lim_{n\to\infty} \gamma_n$ exists, where $\gamma_n$ is as in (\ref{gamma_n}), in which case
$\mathbb{P}(M_n \leq x_n) \to e^{-\tau\gamma}$ with $\gamma = \lim_{n\to\infty} \gamma_n\in [0,1].$
\end{corollary}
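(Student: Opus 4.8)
The plan is to read the corollary off directly from Theorem \ref{MainThm}, reducing the convergence question for $\mathbb{P}(M_n\leq x_n)$ to that of the scalar sequence $\gamma_n$. Since (\ref{result2}) gives $\mathbb{P}(M_n\leq x_n) - F(x_n)^{n\gamma_n}\to 0$, and two sequences differing by a null sequence converge together and to the same limit, it suffices to analyse the behaviour of $F(x_n)^{n\gamma_n}$.

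First I would take logarithms and isolate the role of $\gamma_n$. Writing $F(x_n)=1-\bar{F}(x_n)$ and noting that (\ref{ExpExc}) forces $\bar{F}(x_n)\to 0$, a Taylor expansion of $\log(1-\bar{F}(x_n))$ gives $n\log F(x_n) = -n\bar{F}(x_n)+O\big(n\bar{F}(x_n)^2\big)$; since $n\bar{F}(x_n)\to\tau$ and $n\bar{F}(x_n)^2 = \big(n\bar{F}(x_n)\big)\bar{F}(x_n)\to 0$, this yields $a_n := n\log F(x_n)\to -\tau$. Consequently $\log F(x_n)^{n\gamma_n}=\gamma_n a_n$.

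The core of the argument is then the elementary claim that, because $a_n$ has the nonzero limit $-\tau$, the product $\gamma_n a_n$ converges if and only if $\gamma_n$ converges. One direction is immediate: if $\gamma_n\to\gamma$ then $\gamma_n a_n\to -\tau\gamma$. For the converse, since $a_n\to -\tau\neq 0$ the factor $a_n$ is eventually bounded away from zero, so $\gamma_n=(\gamma_n a_n)/a_n$ converges whenever $\gamma_n a_n$ does. This is the only place where the strict positivity $\tau>0$ in (\ref{ExpExc}) is used, and it is exactly what makes the equivalence sharp rather than one-sided.

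Assembling the pieces, $\mathbb{P}(M_n\leq x_n)$ converges iff $F(x_n)^{n\gamma_n}$ converges iff $\gamma_n a_n$ converges iff $\gamma_n$ converges; and when $\gamma=\lim_{n\to\infty}\gamma_n$ exists, exponentiating gives $F(x_n)^{n\gamma_n}\to e^{-\tau\gamma}$ and hence $\mathbb{P}(M_n\leq x_n)\to e^{-\tau\gamma}$. Finally, each conditional probability in (\ref{gamma_n}) lies in $[0,1]$, so every $\gamma_n\in[0,1]$ and therefore $\gamma\in[0,1]$. I do not anticipate a serious obstacle here; the only point requiring a little care is the converse implication in the displayed equivalence, where one must invoke $\tau\neq 0$ in order to divide through by $a_n$.
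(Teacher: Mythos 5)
Your argument is correct and follows essentially the same route as the paper, which likewise deduces the corollary from (\ref{result2}) by taking logarithms, using $\log(1-t)=-t+o(t)$ to get $n\log F(x_n)\to-\tau$, and exploiting $\tau>0$ to pass between convergence of $\gamma_n$ and of $F(x_n)^{n\gamma_n}$. Your write-up simply fills in the details (in particular the division by $a_n$ in the converse direction) that the paper leaves as a one-line remark.
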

Corollary \ref{EasyCor} follows from (\ref{result2}) since $n\bar{F}(x_n) \to \tau$ if and only $F^n(x_n) \to e^{-\tau}$ which is easily seen by taking logs in the latter expression and using log($1-t) = -t + o(t)$ as $t\to 0$.

A basic question regarding the constant $\gamma$ appearing in Corollary \ref{EasyCor} is whether it is independent of the particular value of $\tau$ 
in (\ref{ExpExc}), i.e., do we obtain the same limiting value of $\gamma_n$ regardless of the specific sequence $x_n$ and $\tau$ used in (\ref{ExpExc})?  We will
see in Section \ref{PeriodicSec} that for sequences with periodic dependence this is indeed the case, and Theorem \ref{UniformConvThm}
gives sufficient conditions for this to hold more generally.

We now turn our attention to the conditional probabilities appearing in the summation (\ref{gamma_n}), which contain local information regarding the strength of extremal clustering in the sequence $\{X_n\}_{n=1}^{\infty}$. 

\begin{Definition}  \label{ClusteringFunctionDefn}
\normalfont Under the same assumptions as in Lemma \ref{lemOB},
let $\{f_n\}_{n=1}^{\infty}$ be the sequence of functions defined on $\mathbb{N}$ by 
\begin{equation}
f_n(i) = \theta_{i,n} = \mathbb{P}(M_{i,i+p_n} \leq x_n \mid X_i > x_n), \quad i\in \mathbb{N}.  \label{PartialClusteringFunction}
\end{equation}
We define the extremal clustering function of $\{X_n\}_{n=1}^{\infty}$ to be the function $\theta: \mathbb{N} \rightarrow [0, 1]$ given by
\begin{equation}
\theta_i = \lim_{n\to\infty} f_n(i)  \label{ClusteringFunction}
\end{equation}
provided the limit exists.
\end{Definition}
In the special case that the sequence $\{X_n\}_{n=1}^{\infty}$ is stationary, the extremal clustering function is simply a constant function equal to the extremal index of the sequence. In the general case, if we think of the index $i$ in $X_i$ as denoting time, then we may regard $\theta_i$ as the extremal index at time $i$.
The definition of $\theta_i$ entails pointwise convergence of the sequence of approximations $\{f_n\}_{n=1}^{\infty}$ in equation (\ref{PartialClusteringFunction}).  When there is a uniformity in this convergence and the extremal clustering function is Ces\`{a}ro summable we obtain the following result.
\begin{theorem}   \label{UniformConvThm} 
\normalfont Suppose $\{X_n\}_{n=1}^{\infty}$ satisfies AIM$(x_n)$ with $n\bar{F}(x_n) \to \tau > 0$. Assume that $\{\theta_i\}_{i=1}^{\infty}$ is Ces\`{a}ro summable and
\begin{equation}
\text{max}_{1\leq i \leq n}| \theta_i - \theta_{i,n} | \to 0 \quad \text{as\,} n\to \infty,  \label{PartialUnifConv}
\end{equation}
where $\theta_{i,n} = f_n(i)$ is as in equation (\ref{PartialClusteringFunction}). 
Then $\mathbb{P}(M_n \leq x_n) \to e^{-\tau\gamma}$ where 
\begin{equation}
\gamma = \lim_{n\to\infty} \frac{1}{n} \sum_{i=1}^{n} \theta_i. \label{CesaroMean}
\end{equation}
Moreover, if $\{y_n\}_{n=1}^{\infty}$ is a sequence of real numbers such that $n\bar{F}(y_n) \to \tau'$ with $\tau' \leq \tau$ then $\mathbb{P}(M_n \leq y_n) \to e^{-\tau'\gamma}$ 
with $\gamma$ as in (\ref{CesaroMean}). 
\end{theorem}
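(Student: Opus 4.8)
The plan is to treat the two assertions separately: the first follows quickly from Corollary \ref{EasyCor}, while the second reduces to showing that the constant $\gamma$ is unchanged when the level is raised to $y_n$.

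For the first assertion, I would note that $\gamma_n$ in (\ref{gamma_n}) is exactly the average $\gamma_n=\frac1n\sum_{i=1}^n\theta_{i,n}$ of the finite-$n$ quantities $\theta_{i,n}=f_n(i)$ from (\ref{PartialClusteringFunction}). The triangle inequality then gives
\begin{equation*}
\Big|\gamma_n-\frac1n\sum_{i=1}^n\theta_i\Big|\le\frac1n\sum_{i=1}^n|\theta_{i,n}-\theta_i|\le\max_{1\le i\le n}|\theta_i-\theta_{i,n}|,
\end{equation*}
which tends to $0$ by the uniform convergence hypothesis (\ref{PartialUnifConv}). Since $\{\theta_i\}$ is assumed Ces\`{a}ro summable, $\frac1n\sum_{i=1}^n\theta_i\to\gamma$ as in (\ref{CesaroMean}), and hence $\gamma_n\to\gamma$. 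The hypotheses AIM$(x_n)$ and $n\bar F(x_n)\to\tau$ are precisely those of Lemma \ref{lemOB}, a suitable $p_n$ always existing by the remark following that lemma, so Corollary \ref{EasyCor} applies and yields $\mathbb{P}(M_n\le x_n)\to e^{-\tau\gamma}$, with $\gamma\in[0,1]$ since each $\theta_i\in[0,1]$.

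For the second assertion I would first observe that $\tau'\le\tau$ forces $\bar F(y_n)\le\bar F(x_n)$, hence $y_n\ge x_n$, for all large $n$. The natural route is to re-run the block argument at level $y_n$: if one can establish the analogue of (\ref{result1}), namely
\begin{equation*}
\mathbb{P}(M_n\le y_n)-\exp\Big\{-\sum_{j=1}^n\mathbb{P}(X_j>y_n,\,M_{j,j+p_n}\le y_n)\Big\}\to 0,
\end{equation*}
then, writing $\theta_{i,n}^{y}=\mathbb{P}(M_{i,i+p_n}\le y_n\mid X_i>y_n)$ and using the common margin, the exponent equals $n\bar F(y_n)\cdot\frac1n\sum_{i=1}^n\theta_{i,n}^{y}\to\tau'\gamma^{y}$ where $\gamma^{y}=\lim_n\frac1n\sum_i\theta_{i,n}^{y}$. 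It then remains to prove the level-independence $\gamma^{y}=\gamma$, which I would attempt by showing that the limiting local indices are unchanged, $\lim_n\theta_{i,n}^{y}=\theta_i$, with enough uniformity to pass to the Ces\`{a}ro average; the monotonicity $y_n\ge x_n$ together with (\ref{PartialUnifConv}) and the common distribution $F$ are the levers here.

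The main obstacle is transferring the dependence control to the higher level. Theorem \ref{MainThm} and Lemma \ref{lemOB} presuppose the AIM bound at the working threshold, whereas we are given only AIM$(x_n)$; since $\alpha_n$ is essentially a covariance of block-maxima exceedance indicators at a single level, it does not transfer automatically to $y_n$, and the crude Cauchy--Schwarz estimate of these covariances is too weak once summed over the $r_n\to\infty$ blocks of (\ref{A_i}). Resolving this is the crux. I would either (i) show directly that, reusing the blocks $A_i$ built for the $x_n$ analysis, the separation estimate of Lemma \ref{SepLem} survives at level $y_n$ by controlling each higher-level covariance through a telescoping across the two thresholds, or (ii) use a subsequence device exploiting $\bar F(y_n)\approx\bar F(x_{mn})$ when $\tau'=\tau/m$ to relate $\mathbb{P}(M_n\le y_n)$ to the already-established limit at level $\tau$, filling in general $\tau'\le\tau$ by the monotonicity of $\mathbb{P}(M_n\le\cdot)$ and density of the rationals. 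Once $\gamma^{y}=\gamma$ is secured, Corollary \ref{EasyCor} applied at level $y_n$ delivers $\mathbb{P}(M_n\le y_n)\to e^{-\tau'\gamma}$ and completes the proof.
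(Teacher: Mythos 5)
Your proof of the first assertion is correct and is exactly the paper's argument: identify $\gamma_n=n^{-1}\sum_{i=1}^n\theta_{i,n}$, bound $|\gamma_n-n^{-1}\sum_{i=1}^n\theta_i|$ by $\max_{1\le i\le n}|\theta_i-\theta_{i,n}|\to0$, invoke Ces\`{a}ro summability, and apply Corollary \ref{EasyCor}. The gap is in the second assertion. You correctly diagnose the obstacle --- AIM is assumed only at level $x_n$ and does not transfer to $y_n$ --- but you then stop at the crux, offering two unexecuted sketches. Route (i), re-running the block argument at level $y_n$ via a ``telescoping across the two thresholds,'' is precisely the step you admit you cannot control, and no mechanism is given for bounding the higher-level block covariances from the lower-level ones. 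Route (ii) is closer to a working proof but as stated is restricted to $\tau'=\tau/m$ with $m$ an integer and leans on an unproven interpolation by monotonicity and density of the rationals; neither the subsequence identification nor the interpolation is carried out. As written, the second half of the theorem is not proved.

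The paper's resolution sidesteps the transfer problem entirely, and is essentially a sharpened, unconditional version of your route (ii). Set $n'=\lfloor(\tau'/\tau)n\rfloor$, so that $n'\bar{F}(x_n)\to\tau'$ and $F(x_n)^{n'}\to e^{-\tau'}$. Since $n'\le n$, Theorem \ref{MainThm} applies to the truncated maximum $M_{n'}$ \emph{at the original level} $x_n$ (all blocks lie inside $\{1,\ldots,n\}$, so AIM$(x_n)$ is all that is needed), giving $\mathbb{P}(M_{n'}\le x_n)=F(x_n)^{n'\gamma_n'}+o(1)$ with $\gamma_n'=(n')^{-1}\sum_{i=1}^{n'}\theta_{i,n}\to\gamma$ by (\ref{PartialUnifConv}) and Ces\`{a}ro summability; hence $\mathbb{P}(M_{n'}\le x_n)\to e^{-\tau'\gamma}$. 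The change of level is then elementary: a union bound over the $n'$ coordinates and the common margin give $|\mathbb{P}(M_{n'}\le x_n)-\mathbb{P}(M_{n'}\le y_{n'})|\le n'\,|\bar{F}(x_n)-\bar{F}(y_{n'})|\to\tau'-\tau'=0$, since both $n'\bar{F}(x_n)$ and $n'\bar{F}(y_{n'})$ tend to $\tau'$. As $n'$ runs through all sufficiently large integers, this yields $\mathbb{P}(M_n\le y_n)\to e^{-\tau'\gamma}$. No dependence condition at level $y_n$ is ever invoked, no rationality of $\tau'/\tau$ is needed, and no analogue of (\ref{result1}) at level $y_n$ has to be established --- which is the missing idea in your attempt.
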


As with the constant $\gamma$ in Corollary \ref{EasyCor}, we may inquire as to whether the extremal clustering function is independent of the value of $\tau$ and sequence $x_n$ used in 
(\ref{ExpExc}).
 Although we do not attempt to answer this in full generality, we note that, as with the conditional probability formulation of the extremal index, 
for most sequences that are of practical interest, the formula defining $\theta_i$ may be reduced to a form that makes no explicit reference to the sequences 
$x_n$ and $p_n$.  For example, under the additional assumption due to \cite{Smith92} which requires that for any $x_n$ in Theorem \ref{MainThm} we have
\begin{equation}
\lim_{p\to\infty}\lim_{n\to\infty} \sum_{k=p}^{p_n}\mathbb{P}(X_{i+k} > x_n \mid X_i > x_n) = 0,   \label{SmithsCondition} 
\end{equation}
for each $i$, then (\ref{ClusteringFunction}) reduces to
\begin{equation}
\theta_i = \lim_{p\to\infty}\lim_{x\to x_F} \mathbb{P}(M_{i,i+p} \leq x \mid X_i > x).
\end{equation}
Another common assumption for statistical applications is the $D^{(k)}(x_n)$ condition 
of \cite{chern91} which we define below in a slightly modified form for our non-stationary setting. 
\begin{Definition} \label{D_k Defn} 
\normalfont
A sequence $\{X_n\}_{n=1}^{\infty}$ as in Theorem \ref{MainThm} is said to satisfy the $D^{(k)}(x_n)$ condition, where $k\in\mathbb{N}$, if
\begin{equation}
n\,\mathbb{P}(X_i > x_n, M_{i,i+k-1} \leq x_n, M_{i+k-1,i+p_n} > x_n) \to 0 \quad \text{as } n\to\infty  \label{D_k} 
\end{equation}
for each $i\in\mathbb{N}$.  For the case $k=1$, we define $M_{i,i} = -\infty$. 
\end{Definition}
Note that it is assumed in Definition \ref{D_k Defn} that $\{X_n\}_{n=1}^{\infty}$ satisfies AIM$(x_n)$ in conjunction with (\ref{D_k}).
Whereas (\ref{AIMalpha}) limits the degree of long range dependence in the sequence, (\ref{D_k}) is a local mixing condition that ensures that the probability of again exceeding the threshold $x_n$ in a block of $p_n$ observations, after dropping below it for $k-1$ consecutive observations 
falls to zero sufficiently rapidly as $n\to\infty$.  The case where $k=1$ implies that in the limit, any exceedances of a high threshold occur in isolation and is implied in the stationary case by the $D'(x_n)$ condition of \cite{leadling83}, Chapter 3.  One might expect that a more natural condition in our non-stationary setting would be to replace the constant 
$k$ in (\ref{D_k}) by $k_i$ to reflect possible variations in the strength of local dependence. However, when (\ref{D_k}) holds for some particular $k$, then it also holds for any other $k\textquotesingle$ with $k\textquotesingle > k,$ and so provided that the sequence $\{k_i\}_{i=1}^{\infty}$ is bounded we may set $k = \text{max}\{k_i : i\in\mathbb{N}\}$ and obtain (\ref{D_k}) for each $i$.  
Thus the assumption of a single value of $k$ in Definition \ref{D_k Defn} allows for variations in the strength of local dependence while at the same time restricting it to not persist too strongly to an arbitrary number of lags.  If whenever $x_n$ is a sequence as in Theorem \ref{MainThm} and the $D^{(k)}(x_n)$ condition holds then (\ref{ClusteringFunction}) reduces to
\begin{equation}
 \theta_i = \lim_{x\to x_F}\mathbb{P}(M_{i,i+k-1} \leq x \mid X_i > x).
\end{equation}

We will assume without further comment for the rest of the paper that the sequence $\{X_n\}_{n=1}^{\infty}$ has a well-defined extremal clustering function as may arise from assumptions (\ref{SmithsCondition}) or (\ref{D_k}).  

\subsection{Periodic dependence} \label{PeriodicSec}  

In this section we assume that the sequence $\{X_n\}_{n=1}^{\infty}$ has a more refined structure than in the previous sections,
namely that of periodic dependence, under which the results of Section \ref{Sec2.2} may be simplified considerably.  
\begin{Definition} \label{PeriodicDefn}
\normalfont A sequence $\{X_n\}_{n=1}^{\infty}$ with common marginal distributions is said to have periodic dependence if there exists $d\in \mathbb{N}$ such that 
$(X_{t_1},\ldots,X_{t_k})  \overset{D}{=}  (X_{t_1 + d},\ldots,X_{t_k + d})$ for all $t_1,\ldots,t_k\in \mathbb{N}.$  The smallest $d$ with this property is called the fundamental period. 
\end{Definition}
Whereas for a strictly stationary sequence an arbitrary shift in time leaves the finite-dimensional distributions unchanged, for a sequence with periodic dependence only time shifts that are a multiple of the fundamental period leave finite-dimensional distributions unchanged. 
 In particular, $M_{a,a+b} \overset{D}{=} M_{c,c+b}$ when $a\equiv c \Mod{d}.$ Such sequences often mimic the dependence structure of certain environmental time series where we might expect a fundamental period of one year.  

The following result concerning the convergence of $\mathbb{P}(M_n \leq x_n)$ shows that Theorem 3.7.1 of \cite{leadling83} for stationary sequences also holds for non-stationary sequences with periodic dependence.
\begin{theorem} \label{PeriodicThm}
\normalfont Let $\{X_n\}_{n=1}^{\infty}$ have periodic dependence and satisfy the conditions of Lemma \ref{lemOB}, with $x_n$ satisying (\ref{ExpExc})
for some $\tau > 0$. 
Suppose that $y_n = y_n(\tau')$ is a sequence of real numbers defined for each $\tau'$ with $0 < \tau' \leq \tau$ so that $n\bar{F}(y_n) \to \tau'$.
Then there exist constants $\gamma$ and $\gamma'$ with $0 \leq \gamma \leq \gamma' \leq 1$ such that 
\begin{align*}
 \text{lim\,sup}\,\,\mathbb{P}\{M_n \leq y_n(\tau')\} &= e^{-\tau'\gamma}  \\
 \text{lim\,inf}\,\,\mathbb{P}\{M_n \leq y_n(\tau')\}  &= e^{-\tau'\gamma'}
\end{align*}
for all  $0 < \tau' \leq \tau$. Hence if $\mathbb{P}\{M_n \leq y_n(\tau')\}$ converges for some $\tau'$ with $0 < \tau' \leq \tau$, then $\gamma = \gamma'$ and 
$\mathbb{P}\{M_n \leq y_n(\tau')\} \to e^{-\tau'\gamma}$ for all such $\tau'$. 
\end{theorem}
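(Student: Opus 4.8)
The plan is to mirror the proof of Theorem 3.7.1 in \cite{leadling83} for stationary sequences, isolating the single place where periodicity must replace stationarity. Throughout I would write $\phi(\tau') = \limsup_n \mathbb{P}(M_n \leq y_n(\tau'))$ and $\psi(\tau') = \liminf_n \mathbb{P}(M_n \leq y_n(\tau'))$, so that the goal becomes showing $\phi(\tau') = e^{-\tau'\gamma}$ and $\psi(\tau') = e^{-\tau'\gamma'}$ for constants $\gamma \leq \gamma'$ that do not depend on $\tau'$. Reading the exponents off $\phi,\psi$ then gives the two displayed identities, and the final convergence assertion is immediate once $\gamma,\gamma'$ are shown to be $\tau'$-free.

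First I would record two facts requiring only common margins. (i) If $u_n$ and $v_n$ both satisfy $n\bar{F}(\cdot) \to \sigma$, then $|\mathbb{P}(M_n \leq u_n) - \mathbb{P}(M_n \leq v_n)| \leq n|F(u_n) - F(v_n)| = n|\bar{F}(u_n) - \bar{F}(v_n)| \to 0$, so $\phi(\tau')$ and $\psi(\tau')$ depend only on $\tau'$ and not on the particular defining sequence $y_n(\tau')$. (ii) For fixed $r$, $0 \leq \mathbb{P}(M_n \leq u) - \mathbb{P}(M_{n+r} \leq u) \leq r\bar{F}(u)$, which together with (i) shows $\mathbb{P}(M_{n+r}\leq y_{n+r}(\sigma)) - \mathbb{P}(M_n \leq y_n(\sigma)) \to 0$ for each fixed $r$; hence $n \mapsto \mathbb{P}(M_n \leq y_n(\sigma))$ varies slowly, and its $\limsup$ and $\liminf$ over any arithmetic progression $D\mathbb{N}$ coincide with those over all $n$. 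I would also note the monotonicity of $\phi,\psi$ in $\tau'$, since a larger $\tau'$ forces a lower threshold.

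The heart of the argument is a block relation linking different values of $\tau'$. Fix integers $1 \leq j \leq k$ and restrict to window sizes $n \in kd\mathbb{N}$, so that $L = n/k$ is a multiple of the fundamental period $d$. Partitioning $\{1,\ldots,jL\}$ into $j$ consecutive blocks of length $L$, inserting $q_n$-separations and discarding the negligible gap mass $O(q_n\bar{F}(x_n))$, Lemma \ref{SepLem} factorizes the maximum over the $j$ blocks; crucially, because each block is shifted from the first by a multiple of $d$, periodicity gives $M(B_i) \overset{D}{=} M(B_1)$, so all $j$ factors are equal. This yields $\mathbb{P}(M_{jL} \leq x_n) = \mathbb{P}(M_L \leq x_n)^j + o(1)$; taking $j=k$ gives $\mathbb{P}(M_L \leq x_n) = \mathbb{P}(M_n \leq x_n)^{1/k} + o(1)$, hence $\mathbb{P}(M_{jL} \leq x_n) = \mathbb{P}(M_n \leq x_n)^{j/k} + o(1)$. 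Since $jL\,\bar{F}(x_n) \to (j/k)\tau =: \tau'$, the sequence $x_n$ reindexed by the window $jL$ is a valid $\tau'$-threshold sequence, so by (i) the left-hand side has the same $\limsup$ as $\mathbb{P}(M_m \leq y_m(\tau'))$, while by (ii) the restriction to the progression $jd\mathbb{N}$ is harmless. Passing to $\limsup$ therefore gives $\phi(\tau') = \phi(\tau)^{j/k}$, i.e. $g(\tau') = (j/k)g(\tau)$ for $g := -\log\phi$, and the identical computation with $\liminf$ handles $\psi$.

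Finally, having $g(\tau') = (j/k)g(\tau)$ for every rational $j/k \in (0,1]$, monotonicity of $g$ and density of the rationals upgrade this to $g(\tau') = (\tau'/\tau)g(\tau)$ for all $\tau' \in (0,\tau]$; setting $\gamma = g(\tau)/\tau$ gives $\phi(\tau') = e^{-\tau'\gamma}$, and similarly $\psi(\tau') = e^{-\tau'\gamma'}$. That $\gamma \leq \gamma'$ follows from $\psi \leq \phi$, while $\gamma,\gamma' \in [0,1]$ follows by comparison with Theorem \ref{MainThm}, which identifies $\phi(\tau) = e^{-\tau\liminf_n\gamma_n}$ and $\psi(\tau) = e^{-\tau\limsup_n\gamma_n}$ with $\gamma_n \in [0,1]$. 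The closing statement is then immediate, since convergence for one $\tau'$ forces $\phi(\tau') = \psi(\tau')$, whence $\gamma = \gamma'$ and convergence to $e^{-\tau'\gamma}$ for every admissible $\tau'$. I expect the main obstacle to be precisely this periodic alignment: unlike the stationary case, the $j$ blocks are equidistributed only when their length is a multiple of $d$, so one must restrict to window sizes in $kd\mathbb{N}$ and then lean on the slow-variation fact (ii) to certify that this restriction perturbs neither the $\limsup$ nor the $\liminf$.
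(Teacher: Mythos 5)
Your proposal is correct and follows essentially the same route as the paper: both reduce the theorem to the approximate power relation $\mathbb{P}(M_n \le x_n) \approx \mathbb{P}(M_{\lfloor n/k\rfloor} \le x_n)^k$ via blocking, Lemma \ref{SepLem} and periodicity, and then invoke the functional-equation argument of Theorem 3.7.1 of \cite{leadling83} (which the paper cites rather than reproduces, while you spell it out). The only substantive difference is how periodic alignment of the blocks is handled: you restrict to $n\in kd\mathbb{N}$ so that the blocks are exactly equidistributed and then remove the restriction by your slow-variation observation, whereas the paper works with arbitrary $n'=\lfloor n/k\rfloor$ and notes that each length-$n'$ block has the law of $M_{r,r+n'}$ for some $0\le r<d$, which differs from $M_{n'}$ in probability by at most $2d\bar{F}(x_n)\to 0$; both devices are sound.
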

Although Theorem \ref{PeriodicThm} makes no reference to the extremal clustering function, when $\mathbb{P}(M_n \leq x_n)$ converges, the constant $\gamma$ in Theorem \ref{PeriodicThm} is identified by Corollary \ref{EasyCor} as $\gamma = \lim_{n\to\infty}\gamma_n$ with $\gamma_n$ as in equation (\ref{gamma_n}). Due to periodicity we obtain the simplified formula $\gamma = d^{-1}\sum_{i=1}^{d}\theta_i,$ and the extremal clustering function is determined by the $d$ values $\{\theta_i\}_{i=1}^{d}$ which repeat cyclically. Moreover, for sequences with periodic dependence, the convergence statement (\ref{PartialUnifConv}) can be strengthened to uniform convergence since $\text{sup}_{i\in\mathbb{N}}| \theta_i - \theta_{i,n} |  = 
\text{max}_{1\leq i \leq d}| \theta_i - \theta_{i,n} |. $ 

The following result is an immediate consequence of Theorem \ref{PeriodicThm}.  
\begin{corollary}
\normalfont Let $\{X_n\}_{n=1}^{\infty}$ have periodic dependence with common marginal distribution function $F$.  For each $\tau > 0$, let $x_n(\tau)$ be a sequence 
such that $n\bar{F}(x_n(\tau)) \to \tau$ and suppose that $\{X_n\}_{n=1}^{\infty}$ satisfies AIM$(x_n(\tau))$ for each such $\tau$.  If $\mathbb{P}\{M_n \leq x_n(\tau)\}$ converges for a single $\tau > 0$ then it converges for all $\tau > 0$, and in particular $\mathbb{P}\{M_n \leq x_n(\tau)\} \to e^{-\tau\gamma}$ for some $\gamma\in[0,1]$.
\end{corollary}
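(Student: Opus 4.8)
The plan is to derive the corollary from Theorem \ref{PeriodicThm} by exploiting the freedom to enlarge the range $0 < \tau' \le \tau$ over which that theorem operates. First I would record that the standing hypotheses of the corollary already imply those of Theorem \ref{PeriodicThm}: the sequence has periodic dependence, and since it satisfies AIM$(x_n(\tau))$ for each $\tau$, the remark following Lemma \ref{lemOB} guarantees that for each such $\tau$ one may choose $p_n$ with $p_n = o(n)$, $n\alpha_n = o(p_n)$ and $q_n = o(p_n)$, so that the full conditions of Lemma \ref{lemOB} hold relative to $x_n(\tau)$. Hence Theorem \ref{PeriodicThm} is applicable with any prescribed upper endpoint.

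Suppose now that $\mathbb{P}\{M_n \le x_n(\tau_0)\}$ converges for a single $\tau_0 > 0$, and fix an arbitrary $\tau > 0$. I would set $\hat\tau = \max(\tau, \tau_0)$ and apply Theorem \ref{PeriodicThm} with upper endpoint $\hat\tau$, taking $x_n(\hat\tau)$ as the reference sequence satisfying $n\bar{F}(x_n(\hat\tau)) \to \hat\tau$ and letting $y_n(\tau') = x_n(\tau')$ for each $0 < \tau' \le \hat\tau$. The theorem then supplies constants $\gamma \le \gamma'$ with $\limsup \mathbb{P}\{M_n \le x_n(\tau')\} = e^{-\tau'\gamma}$ and $\liminf \mathbb{P}\{M_n \le x_n(\tau')\} = e^{-\tau'\gamma'}$ for all $\tau' \in (0, \hat\tau]$. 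Because $\tau_0 \le \hat\tau$ and $\mathbb{P}\{M_n \le x_n(\tau_0)\}$ converges by hypothesis, the concluding sentence of Theorem \ref{PeriodicThm} forces $\gamma = \gamma'$; hence $\mathbb{P}\{M_n \le x_n(\tau')\} \to e^{-\tau'\gamma}$ for every $\tau' \in (0, \hat\tau]$, and in particular for $\tau' = \tau$, giving $\mathbb{P}\{M_n \le x_n(\tau)\} \to e^{-\tau\gamma}$.

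Since $\tau > 0$ was arbitrary, this establishes convergence for all $\tau > 0$. The one point that needs care, and which I would address explicitly, is that the constant $\gamma$ must not depend on the auxiliary choice $\hat\tau = \max(\tau,\tau_0)$, which varies with $\tau$. This is immediate once I note that $\tau_0$ lies in $(0,\hat\tau]$ for every such $\hat\tau$, so the identity $e^{-\tau_0\gamma} = \lim_{n} \mathbb{P}\{M_n \le x_n(\tau_0)\}$ pins down the single value $\gamma = -\tau_0^{-1}\log \lim_{n} \mathbb{P}\{M_n \le x_n(\tau_0)\} \in [0,1]$ regardless of $\hat\tau$. The main obstacle is really just this bookkeeping of constants across the different applications of the theorem; the substance of the argument, namely extending the regime $\tau' \le \tau$ to all $\tau' > 0$ by enlarging the endpoint, is an immediate consequence of Theorem \ref{PeriodicThm} together with the assumption that AIM holds at every level.
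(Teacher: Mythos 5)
Your proposal is correct and follows exactly the route the paper intends: the paper offers no written proof, stating only that the corollary ``is an immediate consequence of Theorem \ref{PeriodicThm}'', and your argument is precisely the spelling-out of that consequence (apply the theorem with reference level $\hat\tau=\max(\tau,\tau_0)$ so that both $\tau$ and the convergent level $\tau_0$ fall in the admissible range $(0,\hat\tau\,]$). Your extra check that $\gamma$ does not depend on the auxiliary choice of $\hat\tau$, being pinned down by $e^{-\tau_0\gamma}=\lim_n\mathbb{P}\{M_n\le x_n(\tau_0)\}$, is a worthwhile piece of bookkeeping that the paper leaves implicit.
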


\subsection{Interexceedance times}   \label{Sec2.3}
  
\cite{ferrseg03} provided a method for estimating the extremal index of a stationary sequence without the need for identifying independent clusters of extremes.  This was achieved by considering the distribution of the time between two exceedances of a threshold $u$, i.e., 
\begin{equation}
T(u) = \text{min}\{ n\geq 1 : X_{n+1} > u \mid X_1 > u \},  \label{eq2.10}
\end{equation}
as $u$ approaches $x_F$.\  In particular, it was shown that the normalized interexceedance time
 $\bar{F}(x_n)T(x_n)$ converges in distribution as $n\to\infty$ to a mixture of a point mass at zero, with probability $1 - \theta$, and an exponential random variable with mean $\theta^{-1}$, with probability $\theta$. The mixture arises from the fact that the interexceedance times can be classified in to two categories: within cluster and between cluster times.  The mass at zero stems from the fact that the within cluster times, which tend to be small relative to the between cluster times, are dominated by the factor $\bar{F}(x_n)$. 

In the stationary case, conditioning on the event $X_1 > u$ in equation (\ref{eq2.10}) may be replaced with $X_i > u,$ and $X_{n+1}$ replaced by $X_{n+i},$ for any $i\in \mathbb{N},$  without affecting the distribution of $T(u)$.  In the non-stationary case we consider for each $i \in \mathbb{N}$ and threshold $u$, the random variable $T_i(u)$ defined by
\begin{equation}
T_i(u) = \text{min}\{n\geq 1   : X_{n+i} > u \mid X_i > u \}, \label{T_i Defn}
\end{equation}
whose distribution in general depends on $i$. We find that the distribution of $\bar{F}(x_n)T_i(x_n)$ converges as $n\to \infty$ to a mixture of a mass at zero, with probability $1-\theta_i$, and an exponential random variable with mean $\gamma^{-1}$, with probability $\theta_i$.  As in \cite{ferrseg03}, a slightly stronger mixing condition is required to derive this result than was needed for Theorem \ref{MainThm}. We denote
by $\mathcal{F}_{j_1,j_2}(u)$,  the $\sigma$-algebra generated by the events  $\{ X_i > u : j_1 \leq i \leq j_2 \}$, $j_1,j_2\in\mathbb{N}$,   
and we define the mixing coefficients
\begin{align}
\alpha^{*}_{n,q}(u) = \text{max}_{1\leq l\leq n-q}\, \text{sup} \mid \mathbb{P}(E_2 \mid E_1) - \mathbb{P}(E_2) \mid,   \label{SAIM}
\end{align} 
where the supremum is over all $E_1 \in \mathcal{F}_{1,l}(u)$ with $\mathbb{P}(E_1)>0$ and $E_2\in \mathcal{F}_{l+q,n}(u).$  
We will assume the existence of a sequence $q_n = o(n)$ such that $\alpha^{*}_{cn,q_n}(x_n) \to 0$ for all $c>0$. This implies that 
$\{X_n\}_{n=1}^{\infty}$ satisfies AIM($x_n$) with the same 
choice of $q_n$ and so we may find a sequence $p_n$ so that (\ref{lemOBeq1}) is satisfied.  We define $\theta_{i,n}$ to be as in equation (\ref{PartialClusteringFunction}) and assume a slightly 
stronger form of convergence than in (\ref{PartialUnifConv}) but weaker than uniform convergence $\text{sup}_{i\in\mathbb{N}}| \theta_i -\theta_{i,n}| \to 0.$ 
 
The limiting distribution of the normalized interexceedance times is given in Theorem \ref{IntExcThm}. 

\begin{theorem}  \label{IntExcThm} 
\normalfont Let $\{X_n\}_{n=1}^{\infty}$ be a sequence of random variables with common marginal distribution $F$ and $\{x_n\}_{n=1}^{\infty}$ a sequence of real numbers such that $n\bar{F}(x_n) \to \tau > 0$. Suppose that there is a sequence of positive integers $q_n = o(n)$  such that $\alpha^{*}_{cn,q_n}(x_n) \to 0$ and $\text{max}_{1\leq i \leq cn}| \theta_i -\theta_{i,n}| \to 0$ for all $c>0.$ 
Then, if $\{\theta_i\}_{i=1}^{\infty}$ is Ces\`{a}ro summable  we have, for each fixed $i\in\mathbb{N}$ and $t>0$
\begin{equation}
\mathbb{P}(\bar{F}(x_n)T_i(x_n) > t) \to  \theta_i \, \textnormal{exp}(-\gamma t). \label{eq2.12}
\end{equation}
\end{theorem}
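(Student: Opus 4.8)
The plan is to follow the strategy of \cite{ferrseg03}, reducing the survival function of the normalized interexceedance time to a conditional block-maximum probability and then factorizing it into a ``local'' cluster term and a ``bulk'' term controlled by Theorem \ref{MainThm}. First I would use that $T_i(x_n)$ is integer-valued to write, with $k_n = \floor*{t/\bar F(x_n)}$,
\begin{equation*}
\mathbb{P}(\bar F(x_n)T_i(x_n) > t) = \mathbb{P}(T_i(x_n) > k_n) = \mathbb{P}(M_{i,i+k_n}\le x_n\mid X_i > x_n) = \frac{\mathbb{P}(X_i > x_n, M_{i,i+k_n}\le x_n)}{\bar F(x_n)},
\end{equation*}
noting that $n\bar F(x_n)\to\tau$ forces $k_n\bar F(x_n)\to t$ and $k_n\sim tn/\tau$, so the relevant window has length proportional to $n$ with constant $c = t/\tau$.

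Next I would split the window $\{i+1,\ldots,i+k_n\}$ into the initial sub-window $\{i+1,\ldots,i+p_n\}$, a separating gap of length $q_n$, and the remainder $R = \{i+p_n+q_n+1,\ldots,i+k_n\}$, and set $E_1 = \{X_i > x_n, M_{i,i+p_n}\le x_n\}$ and $E_2 = \{M(R)\le x_n\}$. The event $\{X_i > x_n, M_{i,i+k_n}\le x_n\}$ is contained in $E_1\cap E_2$ and differs from it only through exceedances in the gap, so it equals $E_1\cap E_2$ up to an error bounded by $\mathbb{P}(X_i>x_n,\, \max_{\text{gap}}X_j > x_n)$. The decisive step is then to factorize across the gap: since $E_1\in\mathcal{F}_{i,i+p_n}(x_n)$ and $E_2\in\mathcal{F}_{i+p_n+q_n,i+k_n}(x_n)$, the coefficient $\alpha^{*}_{cn,q_n}(x_n)$ controls $|\mathbb{P}(E_2\mid E_1) - \mathbb{P}(E_2)|$, whence $|\mathbb{P}(E_1\cap E_2) - \mathbb{P}(E_1)\mathbb{P}(E_2)|\le\bar F(x_n)\,\alpha^{*}_{cn,q_n}(x_n)$. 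This is exactly where the strengthened mixing hypothesis is needed: because $\mathbb{P}(E_1)$ is of order $\bar F(x_n)$, dividing by $\bar F(x_n)$ leaves an error of order $\alpha^{*}_{cn,q_n}(x_n)\to0$, something the weaker AIM coefficient $\alpha_n$ of (\ref{AIMalpha}) could not deliver. Using $\mathbb{P}(E_1) = \theta_{i,n}\bar F(x_n)$, I obtain $\mathbb{P}(\bar F(x_n)T_i(x_n) > t) = \theta_{i,n}\,\mathbb{P}(M(R)\le x_n) + o(1)$.

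It then remains to evaluate the bulk term $\mathbb{P}(M(R)\le x_n)$, which I would treat by applying Theorem \ref{MainThm} (equivalently Lemma \ref{lemOB}) to the interval $R$ of length $\sim cn$; this is legitimate because $\alpha^{*}_{cn,q_n}(x_n)\to0$ forces the AIM coefficient to vanish at this larger scale, so the conditions of Lemma \ref{lemOB} hold for $R$ with the same $p_n,q_n$. This gives $\mathbb{P}(M(R)\le x_n) - \exp\{-\bar F(x_n)\sum_{j\in R}\theta_{j,n}\}\to0$. Invoking $\max_{1\le j\le cn}|\theta_j - \theta_{j,n}|\to0$ to replace $\theta_{j,n}$ by $\theta_j$, and Ces\`{a}ro summability of $\{\theta_i\}$ to identify the block average $|R|^{-1}\sum_{j\in R}\theta_j\to\gamma$ (the initial index $i+p_n+q_n$ being $o(|R|)$), together with $|R|\bar F(x_n)\to t$, the exponent converges to $\gamma t$, so $\mathbb{P}(M(R)\le x_n)\to e^{-\gamma t}$. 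Combining with $\theta_{i,n}\to\theta_i$ yields the claimed limit $\theta_i e^{-\gamma t}$; note that the $o(1)$ error here is harmless, since the division by $\bar F(x_n)$ has already been carried out.

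I expect the main obstacle to be the error control after dividing by $\bar F(x_n)\to0$: any bound that is merely $o(1)$ in absolute terms becomes useless, so every approximation entering the numerator must be sharpened to $o(\bar F(x_n))$. The mixing step is handled precisely by $\alpha^{*}$ as above, but the gap-reinsertion error $\mathbb{P}(X_i > x_n,\,\max_{\text{gap}}X_j > x_n) = \bar F(x_n)\sum_{j\in\text{gap}}\mathbb{P}(X_j > x_n\mid X_i > x_n)$ requires showing that the conditional sum over the gap (at lags exceeding $p_n$) vanishes. This is where the standing assumption of a well-defined extremal clustering function---via Smith's condition (\ref{SmithsCondition}) or the $D^{(k)}(x_n)$ condition---is essential, since it guarantees that a further exceedance near time $i$ after a quiet window of length $p_n$ has probability negligible relative to $\bar F(x_n)$. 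A secondary technical point is that the uniform block-average argument needs the convergence $\max_{1\le i\le cn}|\theta_i - \theta_{i,n}|\to0$ out to the scale $cn$ rather than merely $n$, which is exactly why the hypotheses are stated ``for all $c>0$''.
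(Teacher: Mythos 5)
Your proposal follows essentially the same route as the paper: write $\mathbb{P}(\bar F(x_n)T_i(x_n)>t)$ as $\mathbb{P}(M_{i,i+k_n}\le x_n\mid X_i>x_n)$ with $k_n=\floor*{t/\bar F(x_n)}$, split off the first $p_n$ observations and a $q_n$-gap, decouple with $\alpha^*$, and evaluate the bulk term via Theorem \ref{MainThm} together with the uniform convergence of $\theta_{i,n}$ and Ces\`{a}ro summability. That skeleton is correct and matches the paper's proof.

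One step in your error analysis is misdiagnosed. You claim the gap-reinsertion term $\mathbb{P}(X_i>x_n,\,\max_{\mathrm{gap}}X_j>x_n)$ needs Smith's condition (\ref{SmithsCondition}) or $D^{(k)}(x_n)$ to be shown $o(\bar F(x_n))$. First, Smith's condition as stated controls conditional exceedance probabilities only at lags $p\le k\le p_n$, whereas the gap sits at lags $p_n+1,\ldots,p_n+q_n$, so it does not cover the term you need. Second, no such extra assumption is required: the event $\{\max_{\mathrm{gap}}X_j>x_n\}$ lies in $\mathcal{F}_{i+p_n+1,\,i+p_n+q_n}(x_n)$ and is separated from $\{X_i>x_n\}$ by $p_n\ge q_n$ intermediate indices for large $n$, so the conditional form of the mixing coefficient gives directly
\begin{equation*}
\mathbb{P}\big(\max_{\mathrm{gap}}X_j>x_n\mid X_i>x_n\big)\le q_n\bar F(x_n)+\alpha^*_{n,q_n}(x_n)\to 0,
\end{equation*}
which, after multiplying by $\mathbb{P}(X_i>x_n)$, is exactly the $o(\bar F(x_n))$ bound you need. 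This is the point of assuming the conditional coefficient $\alpha^*$ rather than AIM: it absorbs both the factorization error and the gap error. A second, smaller omission: when you apply Theorem \ref{MainThm} to the bulk window of length $\sim k_n$, the block length $p_n$ chosen so that $n\alpha_{n,q_n}=o(p_n)$ need not satisfy $k_n\alpha_{k_n,q_n}=o(p_n)$ when $t>\tau$, so one must introduce a new $p_n'$ adapted to the scale $k_n$ and then check that the resulting block average $k_n^{-1}\sum_j\theta'_{j,n}$ (with $\theta'_{j,n}$ defined via $p_n'$) still converges to $\gamma$; this comparison of $\theta_{j,n}$ with $\theta'_{j,n}$ again uses $\alpha^*$ and is carried out explicitly in the paper. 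Neither point changes your overall strategy, but both need to be patched for the argument to close.
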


\section{Estimation with a focus on periodic sequences}   \label{Sec3}

In this section we consider moment and maximum likelihood estimators for $\theta_i$ and $\gamma$ based on the limiting distribution of normalized interexceedance times given in Theorem \ref{IntExcThm}. 
We first show that the intervals estimator of \cite{ferrseg03} may be used to estimate $\theta_i$ and then consider likelihood based estimation along the lines of \cite{suv07}. 
For simplicity, we focus our discussion on the case of periodic dependence as in Definition \ref{PeriodicDefn}. Such an assumption reduces estimation of the extremal clustering function to estimating the vector $\bm{\theta} = (\theta_1,\ldots,\theta_d)$ with $\gamma = d^{-1}\sum_{i=1}^{d}\theta_i$ 
where $d$ is the fundamental period which, for simplicity, we assume to be known a-priori. Such an assumption is important for the moment based estimators of Section \ref{Sec3.2} where one needs replications of interexceedance times in order to use the estimators, but can easily be relaxed for likelihood based inference.  

\subsection{Moment based estimators}    \label{Sec3.2}

Theorem \ref{IntExcThm} implies that the first two moments of $\bar{F}(u)T_i(u)$ satisfy $\mathbb{E}\{\bar{F}(u)T_i(u) \} = \theta_i / \gamma + o(1) $ and $\mathbb{E}[\{ \bar{F}(u)T_i(u)  \}^2 ]  = 2\theta_i / \gamma^2 + o(1)$ as $u\to x_F$. Assuming the threshold is chosen to be suitably large so that the $o(1)$ terms can be 
neglected, these two equations can be solved with respect to the unknown parameters to give
\begin{align}
\gamma  = \frac{2\, \mathbb{E}(\bar{F}(u)T_i(u))}{\mathbb{E}(\{ \bar{F}(u)T_i(u)  \}^2)}\,\,\, \text{and } \,
\theta_i  = \frac{ 2 \{\mathbb{E}(\bar{F}(u)T_i(u)) \}^2   }{\mathbb{E}(\{ \bar{F}(u)T_i(u)  \}^2) }  =   \frac{ 2 \{\mathbb{E}(T_i(u)) \}^2   }{\mathbb{E}(\{T_i(u) \}^2)}\mathbin{\raisebox{0.5ex}{,}}   \quad 1\leq i \leq d.    \label{eq3.1}
\end{align}
A complication that arises in the non-stationary setting is that, since $\theta_i$ is defined via a conditional probability 
given the event $X_i > u,$ if $X_i$ does not exceed the threshold $u$ then there are no interexceedance times to estimate $\theta_i$.  This problem doesn't arise in the stationary case where every interexceedance time may be used to estimate the extremal index $\theta$. 

In order to estimate $\theta_i$ then, it is natural to assume that the extremal clustering function is structured in some way, e.g., periodic or piecewise constant. Making such an assumption allows us to use multiple interexceedance times to estimate $\theta_i$. Focusing on the case where $\{X_n\}_{n=1}^{\infty}$ has periodic dependence with fundamental period $d$, all exceedances of the threshold $u$ occuring at points that are separated by a multiple of $d$ give rise to interexceedance times that may be used to estimate the same value of the extremal clustering function.  More precisely, suppose that $X_1,\ldots,X_n$ is a sample of size $n$ of the process with exceedance times $E = \{1\leq i\leq n : X_i > u\},$ and corresponding interexceedance times 
$I = \{T_i(u): i\in E\, \backslash \{\text{max}(E)\}  \},$ with $T_i(u)$ as in equation (\ref{T_i Defn}).
The set of interexceedance times that may be used for estimating $\theta_i$ is the subset $I_i \subseteq I$ defined by 
$I_i = \{T_j(u) \in I : j\equiv i \Mod{d} \}.$  If $| I_i\,| = N_i$, then we may relabel the elements of $I_i$ as 
$I_i = \{T_i^{(j)} \}_{j=1}^{N_i}$ where now the subscript remains fixed.  Making further, more refined assumptions regarding the nature of the periodicity of the process under consideration may give rise to different sets $I_i$.
For example, in an environmental time series setting it may be reasonable to assume that the extremal clustering function is piecewise constant within months or seasons, so that all interexceedance times that correspond to exceedances within the same calendar month or season belong to the same $I_i$.

Equation $(\ref{eq3.1})$ suggests the estimator 
\begin{equation}
\hat{\theta}_i = \frac{2 \big(\sum_{j=1}^{N_i} T_i^{(j)} \big)^2   }{ N_i \sum_{j=1}^{N_i} (T_i^{(j)} )^2  }\mathbin{\raisebox{0.5ex}{,}}   \label{IntHat}
\end{equation}
whose bias we now investigate.  From ($\ref{eq2.12}$) we have that for $n \in \mathbb{N}$ 
\begin{equation*}
\mathbb{P}( T_i(x_n) > n )  = \theta_i \, F(x_n)^{n\gamma} + o(1), \\
\end{equation*}
which motivates consideration of the positive integer valued random variable $T$ defined by  
\begin{equation*}
\mathbb{P}(T> n) = \theta_i \, p^{n\gamma}, \quad \text{for } n\geq 1,   
\end{equation*}
where $p\in (0,1)$ and $\theta_i, \gamma \in (0,1]$ and we may identify $p$ with $F(x_n).$   In a similar manner to \cite{ferrseg03}, we find that 
$\mathbb{E}(T) = 1 + \theta_i p^{\gamma}  (1 - p^{\gamma} )^{-1}  $ and 
$\mathbb{E}(T^2) = 1 + \theta_i p^{\gamma}  (1 - p^{\gamma} )^{-1} + 2\, \theta_i \, p^{\gamma} (1 - p^{\gamma})^{-2}$,
so that upon simplification we find that
\begin{equation}
\frac{ 2\big\{\mathbb{E}(T) \big\}^2 }{\mathbb{E}(T^2) } = \frac{2( 1 - p^{\gamma} + \theta_i p^{\gamma} )^2  }{ (1-p^{\gamma})^2 + \theta_i p^{\gamma}(1-p^{\gamma} ) + 2\, \theta_i p^{\gamma} }\cdot  \label{eq3.6}
\end{equation}
A Taylor expansion of the right hand side of equation (\ref{eq3.6}) around $p=1$ gives 
\begin{equation*}
\frac{ 2\big\{\mathbb{E}(T) \big\}^2 }{\mathbb{E}(T^2) } =  \theta_i +\gamma(3/2 - 2\theta_i)(1 - p) + O\big\{ (1-p )^2 \big\}, \quad \text{as } p \to 1,
\end{equation*}
so that the first order bias of $\hat{\theta}_i$ is $\gamma(3/2 - 2\theta_i)\bar{F}(x_n)$.
On the other hand, since
\begin{equation*}
\theta_i =  \frac{2 \big\{\mathbb{E}(T-1) \big\}^2}{\mathbb{E}\{(T-1)(T-2)\}}\,\mathbin{\raisebox{0.5ex}{,}}
\end{equation*}
this motivates the estimator 
\begin{equation}
\tilde{\theta}_i =  \frac{2 \sum_{j=1}^{N_i} (T_i^{(j)} - 1)^2  } { N_i \sum_{j=1}^{N_i} (T_i^{(j)} -1)(T_i^{(j)} - 2)}\,\mathbin{\raisebox{0.5ex}{,}}   \label{IntTilde}
\end{equation}
whose first order bias is zero.\ This estimator forms the key component of the intervals estimator of \cite{ferrseg03}, which we can use to estimate $\theta_i$.  We note that $\tilde{\theta}_i $ may take values greater than 1 and is not defined if max$(I_i) \leq 2$ as then the denominator in (\ref{IntTilde}) is zero.  In order to deal with these cases, the intervals estimator $\theta_i^*$ of $\theta_i$ is defined as
\begin{displaymath}
\theta_i^* = 
\begin{cases}
\text{min}\{1, \hat{\theta}_i\} \quad \text{if}\,\, \text{max}(I_i) \leq 2, \\
\text{min}\{1, \tilde{\theta}_i \}  \quad \text{if}\,\, \text{max}(I_i) > 2.
\end{cases}
\end{displaymath}
 
While equation (\ref{eq3.1}) also suggests an estimator for $\gamma$, this is based only on the interexceedances relevant to estimating $\theta_i$ and also requires an estimate of $\bar{F}(u)$. One possibility is to obtain $d$ such estimates and take the mean of these as the estimate of $\gamma$.  However, this estimator need not respect the relation $\gamma = d^{-1}\sum_{i=1}^{d} \theta_i$, a consequence of the fact that we dropped the $o(1)$ terms when solving the first two moment equations.  In the examples that we consider in Section \ref{Sec4}, we estimate $\gamma$ using the mean of the estimates for the $\theta_i$ values.    

\subsection{Maximum likelihood estimation}    \label{Sec3.3}

Theorem \ref{IntExcThm} also allows for the construction of the likelihood function for the vector of unknown parameters.  This is an attractive approach due to the modelling possibilities that become available, however, as discussed in \cite{ferrseg03} in the stationary case, problems arise with maximum likelihood estimation due to uncertainty in how to assign interexceedance times to the components of the limiting mixture distribution.  Since the asymptotically valid likelihood is used as an approximation at some subasymptotic threshold $u$, all observed normalized interexceedance times are strictly positive.  Assigning all interexceedance times to the exponential part of the limiting mixture means that they are all being classified as between cluster times.\  This is tantamount to exceedances of a large threshold occuring in isolation, and so the maximum likelihood estimator based on this, typically misspecified, likelihood converges in probability to 1 regardless of the true underlying value of $\theta$. 

This problem was addressed in \cite{suv07} for sequences satisfying the $D^{(2)}(x_n)$ condition, i.e., the case $k=2$ in (\ref{D_k}).  For such sequences,
 in the limit as $n\to\infty$, exceedances above $x_n$ cluster into independent groups of consecutive exceedances, so that all observed interexceedance times equal to one are assigned to the zero component of the mixture likelihood.  On the other hand, all interexceedance times greater than one are assigned to the exponential component of the likelihood. It was found that, when the  $D^{(2)}(x_n)$ condition is satisfied,
maximum likelihood estimation outperforms the intervals estimator in terms of lower root mean squared error.
  The consecutive exceedances model of clusters implied by $D^{(2)}(x_n)$ is in contrast to the general situation where within clusters, exceedances may be separated by observations that fall below the threshold.

If we were to make the $D^{(2)}(x_n)$ assumption in our non-stationary setting, so that the 
consecutive exceedances model for clusters is accurate, then with $I_i = \{T_i^{(j)} \}_{j=1}^{N_i}$ the interexceedance times relevant for estimating $\theta_i$ as in Section \ref{Sec3.2},
we obtain the likelihood function as
\begin{equation*}
L(\bm{\theta} ; I ) = \prod_{i=1}^{d} L_i(\bm{\theta} ; I_i ) 
\end{equation*}
where $I = \cup_{i=1}^{d}I_i$ is the set of all interexceedance times and
\begin{equation*}
 L_i(\bm{\theta} ; I_i)  = \prod_{j=1}^{N_i} (1-\theta_i)^{\mathbbm{1}[T^{(j)}_i=1] }\big\{\theta_i \gamma \text{exp}(-\gamma \bar{F}(x_n)T_i^{(j)} )  \big\}^{\mathbbm{1}[T^{(j)}_i >1]}.
\end{equation*}
The full log-likelihood is then
\begin{align}
l(\bm{\theta} ; I) = & \sum_{i=1}^{d}(N_i -  n_i)\text{log}(1-\theta_i) + \sum_{i=1}^{d}n_i\, \text{log}(\theta_i) + \big(\sum_{i=1}^{d}n_i  \big)\text{log}(\gamma)  \notag \\
& -\gamma \bar{F}(x_n)\sum_{i=1}^{d}\sum_{j=1}^{N_i}(T_i^{(j)}-1) - \gamma\bar{F}(x_n)\sum_{i=1}^{d} n_i, \label{loglik}
\end{align}
where $\gamma = d^{-1} \sum_{i=1}^{d}\theta_i,$ $n_i = \sum_{j=1}^{N_i}\mathbbm{1}[T^{(j)}_i > 1],$ and in practice $\bar{F}(x_n)$ must be replaced with an estimate.
Unlike in the stationary case, the likelihood equations don't have a closed form solution, essentially due to the dependence of 
$\gamma$ on all the $\theta_i$. Equation (\ref{loglik}), however, is easily optimized numerically provided $d$ is not too large.  If $d$ is large, it is more natural to parameterize $\theta_i$ in terms of a small number of parameters which we may estimate by maximum likelihood or consider non-parametric estimation along the lines of \cite{einmetal16}. 

We may generalise this idea and assign all interexceedance times less than or equal to some value $k$ to the zero component of the likelihood, so that the corresponding expression for $L_i$ becomes
\begin{equation}
 L_i(\bm{\theta} ; I_i )  = \prod_{j=1}^{N_i} (1-\theta_i)^{\mathbbm{1}[T^{(j)}_i \leq k] }\big\{\theta_i \gamma \text{exp}(-\gamma \bar{F}(x_n)T_i^{(j)} )\big\}^{\mathbbm{1}[T^{(j)}_i>k] }. \label{L_i}
\end{equation}
 This may be justified by the assumption that the sequence satisfies the  $D^{(k+1)}(x_n)$ condition.
Selection of an appropriate value of $k$ is equivalent to the selection of the run length for the runs estimator, and this problem is considered 
in the stationary case in \cite{suv10} and \cite{cai19}. 
However, in a non-stationary setting, where the clustering characteristics of the sequence may change in time, the appropriate value of $k$ may also be time varying, so that $k$ may be replaced with $k_i$ in equation (\ref{L_i}).  
Although, as was discussed  in Section \ref{Sec3.2}, we may take a constant value of $k$ in the definition of $D^{(k)}(x_n),$ for the purposes
of estimating $\theta_i$, one wants to select for each $i$, the smallest $k=k_i$ such that (\ref{D_k}) is satisfied \citep{Hsing93}.
 If too small a value is selected for $k_i$ then some of the interexceedance times may be wrongly assigned to the exponential component of the likelihood leading to an overestimate of
 $\theta_i$ whereas if $k_i$ is selected to be too large then we tend to underestimate $\theta_i$.  

\section{Examples} \label{Sec4}

In this section we consider two simple examples of non-stationary Markov sequences with a periodic dependence structure and common marginal distributions. 
The first example we consider is the Gaussian autoregressive model  
\begin{equation}
X_{n+1} = \rho_n X_n+ \epsilon_n, \quad  n \geq 1,  \label{GaussianMod}
\end{equation}
where $\epsilon_n \sim N(0, 1-\rho_n^2)$,  $|\rho_n | < 1$ and $X_1 \sim N(0,1)$. 
In our second example, we consider a model where $(X_n, X_{n+1})$ follow a bivariate logistic distribution with joint distribution function
\begin{equation} 
F(x_n, x_{n+1}) = \text{exp}\big\{- ( x_n^{-1/\alpha_n} + x_{n+1}^{-1/\alpha_n})^{\alpha_n} \big\}, \quad n\geq 1 , \label{LogisticMod} 
\end{equation}
$\alpha_n \in (0,1]$ and $X_1 \sim \text{Fr\'{e}chet}(1)$ so that $\mathbb{P}(X_1 \leq x) = e^{-1/x}, x \geq 0.$
For the Gaussian model, no limiting extremal clustering occurs at any point in the sequence, so that $\theta_i = 1$ for each $i$, in contrast to the logistic model where $\theta_i < 1$ for each $i$.

For sufficiently well behaved stationary Markov sequences, mixing conditions much stronger than those considered in Section \ref{Sec2} hold.  
For example, for the stationary Gaussian autoregressive sequence, with $\rho_n = \rho$ in (\ref{GaussianMod}) for all $n\geq 1,$ Theorems 1 and 2 from \cite{Athreya86} give that the mixing conditions of Theorem \ref{MainThm} and Theorem \ref{IntExcThm}  hold for any sequence $q_n$ such that $q_n \to \infty$, $q_n = o(n)$, for any 
$x_n$. Analogous results also hold for the non-stationary models that we consider in this section, see for example \cite{Bradley05} Theorem 3.3 and \cite{davydov73} Theorem 4.     
 
\subsection{Gaussian autoregressive model}  \label{Sec4.2}

Stationary sequences $\{X_n\}_{n=1}^{\infty}$ where each $X_i$ is a standard Gaussian random variable, are extensively studied in Chapter 4 of \cite{leadling83}.  It is shown there that if the lag $n$ autocorrelation $\rho(n)$ satisfies $\rho(n)\,\text{log\,}n \to 0$, then the extremal index $\theta$ of the sequence equals one and so no limiting extremal clustering occurs.  Thus, the stationary autoregressive sequence with $\rho_n = \rho$ in (\ref{GaussianMod}) for all $n\geq 1$ has extremal index one, provided $\rho < 1.$  This is a special case of the more general result that a stationary asymptotically independent Markov sequence has an extremal index of one \citep{Smith92}. We say that the stationary sequence $\{X_n\}_{n=1}^{\infty}$ is asymptotically independent at lag $k$ if \raisebox{2pt}{$\chi$}$(k) = 0$ where
\begin{displaymath}
\raisebox{2pt}{$\chi$}(k) = \lim_{u\to x_F}\mathbb{P}(X_{n+k}  > u \mid X_n > u), \quad k\geq 1,
\end{displaymath}
and asymptotically independent if \raisebox{2pt}{$\chi$}$(k) = 0$ for all $k$ \citep{ledtawn03}.

Here, we consider the non-stationary autoregressive model (\ref{GaussianMod}) and specify a periodic lag one correlation function  $ \rho_{n+1} = 0.5 + 0.25\,\text{sin}(2\pi n/7)$ for $n \geq 0$.  
Applying Theorem 6.3.4 of \cite{leadling83}, and comparing the non-stationary sequence to an independent standard Gaussian sequence, we deduce that $\mathbb{P}(M_n \leq x_n) - \Phi(x_n)^n \to 0$ as $n \to \infty$  where $\Phi$ is the standard Gaussian distribution function, and thus conclude that $\gamma = 1$ and $\theta_i = 1$ for $i=1,\ldots,7.$  
The same conclusion may also be drawn by applying Theorem 4.1 of \cite{husl83}, which shows that if $x_n$ satisfies (\ref{ExpExc}) then $\mathbb{P}(M_n \leq x_n) \to e^{-\tau}.$ 

We simulated 1000 realizations of this sequence of length $10^4$ and, for each realization, estimated $\theta_1,\ldots, \theta_7$ and $\gamma$ for a range of high thresholds, using both the intervals estimator and maximum likelihood with $k$ in equation (\ref{L_i}) equal to zero and one. We then repeated this procedure for sequences of length $10^5$ and $10^6$.  We found that the maximum likelihood estimator with $k=0$ gave by far the best performance as measured by root mean squared error in $\gamma$. In fact, in this case the 0.025 and 0.975 empirical quantiles of the estimated values of $\gamma$ were both 1 to two decimal places in all simulations.  This is not surprising since selecting $k=0$ ensures that all interexceedance times have the correct asymptotic classification as between cluster times. However, in a real data example such a level of prior knowledge regarding asymptotic independence is not realistic and would render estimation redundant. 
Although maximum likelihood estimation with $k=1$ performed slightly poorer than the intervals estimator, both methods produced broadly similar results. 

Table \ref{T1} shows the 0.025 and 0.975 empirical quantiles of the parameter estimates obtained using the intervals estimator. In the table, $u=q_p$ corresponds to the threshold that there is probability $p$ of exceeding at each time point i.e., $\mathbb{P}(X_i > q_p) = p$.  Although the true value of each $\theta_i$ is 1, so that no extremal clustering occurs in the limit
as $u\to \infty$, clustering may occur at subasymptotic levels.  Moreover, there will tend to be more subasymptotic clustering in the sequence at points with a larger lag one autocorrelation, i.e., larger $\rho_i$.  This point has been thoroughly discussed in the context of stationary sequences and estimation of the extremal index \citep{NavTawn, eastoetawn12} and leads to the notion of a subasymptotic or threshold based extremal index.  

\begin{table}  
\centering
\caption{0.025 and 0.975 empirical quantiles of the estimates of $\theta_1,\ldots,\theta_7, \gamma$, in the Gaussian autoregressive model using the intervals estimator. These are based on 1000 realizations of the process for different sample sizes $n$ and thresholds $u$. } 
 \label{T1}
\begin{tabular}{|c | c | c | c | c | c | c | c | c | c | }
 \hline 
  $n$  & $u$  & $\theta_1$ & $\theta_2$ & $\theta_3$ & $\theta_4$ & $\theta_5$ & $\theta_6$ & $\theta_7$  & $ \gamma $  \\
\hline
 $10^4$ & $q_{0.10}$ & $ .53, .84  $ & $ .39, .66 $ & $ .34, .62 $ &  $ .46, .77 $ & $.62, .93$ & $.69, 1.0$ & $.66, 1.0$ & $.62, .74 $  \\
\hline
$10^4$  & $q_{0.05}$ & $.53, .96$ & $.39, .78$ & $.35, .74$ & $.48, .91$ &  $.61, 1.0$  & $.70, 1.0$ & $ .64, 1.0$ & .66, .82  \\
\hline
$10^4$ &  $q_{0.01}$ & $.54, 1.0 $ & $.41, 1.0$ & $.39, 1.0$ & $.51, 1.0 $ &  $.61, 1.0$ & $.62, 1.0$ & $.60, 1.0$ & .75, .97  \\   
\hline
$10^5$ &  $q_{0.10}$ & $.62, .73 $ & $.46, .56$ & $ .42, .51$ & $.55, .65$ &  $.71, .81  $ & $.78, .90$ & $ .77, .87$ &  .65, .69 \\   
\hline
$10^5$ &   $q_{0.05}$ & $.66, .80  $ & $.50, .63$ & $.46, .59$ & $.60, .73$ &  $.75, .90$ & $.81, .97$ & $.79, .94$ &  .70, .75  \\   
\hline
$10^5$ & $q_{0.01}$ & $.69, 1.0 $ & $.55, .84$ & $.52, .81$ & $.65, .96$ &  $.76, 1.0$ & $.80, 1.0$ & $.78, 1.0$ &  .78, .88  \\  
\hline
$10^6$ &  $q_{0.05}$ & $.71, .75$ & $.54, .58$ & $.51, .55$ & $.64, .69$ & $.80, .85$ & $.87, .91$ & $.84, .89$ & .71, .73 \\
\hline
$10^6$ &  $q_{0.01}$ & $.78, .89$  &  $.63, .73$  &   $.60, .70$  &  $.74, .84$  &  $.87, .97$  &  $.90, 1.0 $ &  $.88, .99$  &.81, .84 \\
\hline
$10^6$ &  $q_{0.001}$ & $.77 , 1.0$  &  $.65, .98$  &   $.65, .94$  &  $.75, 1.0$  &  $.82, 1.0$  &  $.83, 1.0$ &  $.83, 1.0$  & .86, .95 \\ 
\hline
\end{tabular}
\end{table}

\subsection{Bivariate logistic dependence}   \label{Sec4.3}

The stationary logistic model, that is, (\ref{LogisticMod}) with $\alpha_n = \alpha$ for all $n\geq 1$, has been thoroughly studied 
\citep{Smith97, ledtawn03, suv07}.
The parameter $\alpha$ controls the strength of dependence between adjacent terms in the sequence, with $\alpha=1$ corresponding to independence and $\alpha \to 0$ giving complete dependence.   Such a sequence exhibits asymptotic dependence provided $\alpha < 1$, in particular, $\lim_{u\to \infty} \mathbb{P}(X_{n+1} > u \mid X_n > u)  = 2 - 2^{\alpha}.$  By exploiting the Markov structure of the sequence, precise calculation of $\theta$ can be achieved using the numerical methods described in \cite{Smith92}, where it is found for example that the sequence with $\alpha = 1/2$ has $\theta = 0.328,$ 
and moreover, equation (\ref{SmithsCondition}) is shown to hold for all $\alpha \in (0,1]$.
The case of $\alpha = 1/2$ is also considered in \cite{suv07} where, based on diagnostic plots, it is concluded that the $D^{(2)}(x_n)$ condition is not satisfied for this sequence, and moreover, the maximum likelihood estimator for $\theta$ based on a run length of $k=1$ has bias of around $20\%$. \cite{suv10} find that a more suitable run length is $k=5$, and in this case the maximum likelihood estimator for $\theta$ has lower root mean squared error than the intervals estimator.  Smaller values of $\alpha$ will tend to be associated with larger values of the run length $k$, though the precise nature of this relation is unclear. 

We consider the non-stationary logistic model (\ref{LogisticMod}) with $\alpha_{n+1} = 0.5 + 0.25\, \text{sin}(2\pi n /7) $ for $n\geq 0.$  Note that although we have specified the same parametric form for the dependence parameters $\alpha$ as in the previous example for $\rho$, the two parameters are not directly comparable. 
We simulated 1000 realizations of this process, of lengths $10^4$ and $10^5$, and estimated $\theta_1,\ldots,\theta_7$ using maximum likelihood
with $k=5,$ at a range of different thresholds. Table \ref{T3} shows, for the different sample sizes and thresholds considered,  the 0.025 and 0.975 
empirical quantiles of the parameter estimates obtained from this simulation.  Although the exact values of the parameters are unknown, making evaluation of any estimators performance impossible, an upper bound for $\theta_i$ is easily obtained as $ \lim_{u\to\infty} \mathbb{P}(X_{i+1} \leq u \mid X_i > u) = 2^{\alpha_i} - 1$. 
In our case this gives the bounds 
$(\theta_1,\ldots, \theta_7) \leq (0.41, 0.62, 0.67, 0.52, 0.31, 0.19, 0.24)$ and $\gamma \leq 0.42$ where the relation $\leq$ is interpreted componentwise.
It is conceivable that the methods in \cite{Smith92} could be adapted to the non-stationary case to allow exact computation of $\theta_i$ though we do not pursue this direction here.

We also considered estimation of $\theta_i$ using the intervals estimator and obtained similar results to the maximum likelihood estimates. The median value of the 1000 estimates for each parameter using the different methods of estimation are shown in Figure \ref{Fig1} for the sample size of $10^5$ and threshold $q_{0.05}$. The estimators clearly recover the periodicity in the dependence structure of the sequence and, on average, respect the upper bound for $\theta_i$ of $2^{\alpha_i} - 1$.

\begin{table}  
\centering
\caption{0.025 and 0.975 empirical quantiles of the estimates of  $\theta_1,\ldots,\theta_7, \gamma$, in the logistic time series model using maximum likelihood with $k=5$.  These are based on 1000 realizations of the process for different sample sizes $n$ and thresholds $u$.  }
\label{T3}
\begin{tabular}{|c | c | c | c | c | c | c | c | c | c | }
 \hline 
  $n$  & $u$  & $\theta_1$ & $\theta_2$ & $\theta_3$ & $\theta_4$ & $\theta_5$ & $\theta_6$ & $\theta_7$  & $ \gamma $   \\
\hline
 $10^4$ & $q_{0.10}$ &   $.19, .39$ & $.38, .58$ & $.44, .65$ & $.30, .52$ &  $.12, .29$ & $.05, .19$ & $ .08, .23$ & .28, .35  \\
\hline
$10^4$  & $q_{0.05}$ & $.21, 46 $ & $.41, .67$ & $.46, .72$ & $.30, .55$ &  $.11, .34$ & $.05, .22$ & $.08, .27$ &  .29, .39  \\
\hline
$10^4$ &  $q_{0.01}$ & $.11, .65$ & $.30, .85$ & $.37, .92$ & $.19, .74$ &  $.03,  .50$ & $.00, .34$ & $.00, .41$ & .27, .48   \\   
\hline
$10^5$ &  $q_{0.10}$ & $.22, .35 $ & $.42, 54$ & $.48, .62 $ & $.32, .49$ &  $.16, .24$ & $ .08, .17$ & $.11, .20$ & .29, .33  \\   
\hline
$10^5$ &   $q_{0.05}$ & $.27, .42 $ & $ .46, .61 $ & $.53, .66$ & $.36, .51$ &  $.17, .29$ & $.08, .19$ & $ .10, .21$ &  .32, .37   \\   
\hline
$10^5$ & $q_{0.01}$ & $.27, .46$ & $ .48, .67 $ & $.53, .73$ & $.35, .55$ &  $.15, .32$ & $.07, .20$ & $ .11, .26$ & .33, .40   \\  
\hline 
\end{tabular}
 \label{BVLtable}
\end{table}

\begin{figure}[h]
\begin{center}
\includegraphics[scale=0.75]{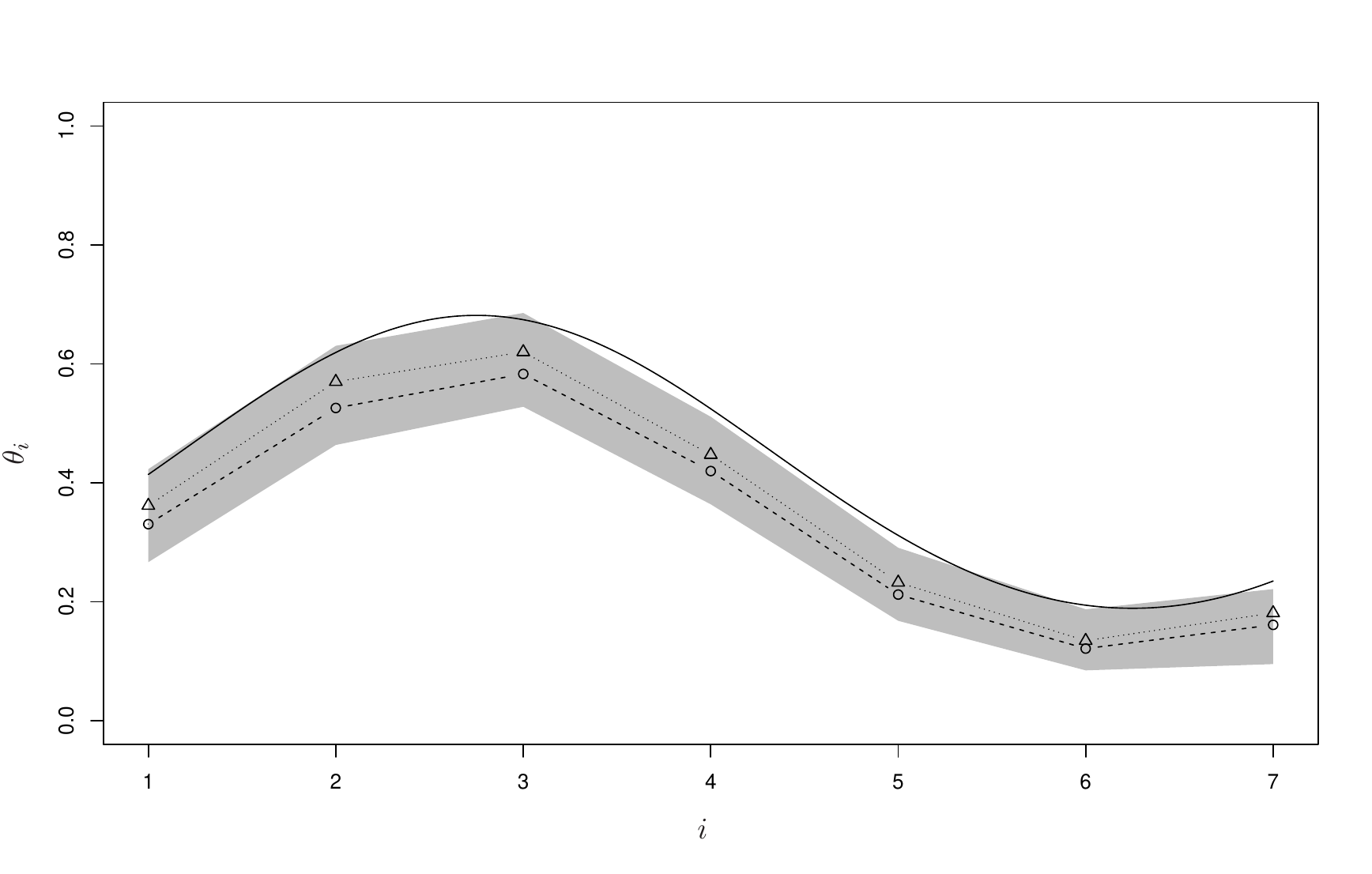}
\caption{Illustration of estimators (triangles: intervals estimator, and circles: maximum likelihood with $k=5$) obtained from $10^3$ 
realizations from the non-stationary logistic model of length $10^5$ and threshold $q_{0.05}$. 
The marked points correspond to the median estimate from $10^3$ realizations of the model. The grey region contains the 0.025 and 0.975 empirical quantiles 
of the parameter estimates using both the intervals and maximum likelihood estimators.  It is constructed by taking the pointwise maxima of the 0.975 quantiles (upper boundary) and pointwise minima of the 0.025 quantiles (lower boundary). 
The solid black curve shows the upper bound for $\theta_i$ of $2^{\alpha_i}-1$. }
\label{Fig1}
\end{center}
\end{figure}
	
\section{Proofs}   \label{SecProofs}

\subsection{Auxiliary results} 

In this section we state and prove some Lemmas that are required in the proof of Theorem \ref{MainThm}. 

\begin{lemma}  \label{L5.1}   \normalfont
Let $\{t_n\}_{n=1}^{\infty}$ be a sequence of positive integers and $a_{i,n}$, $i, n \in \mathbb{N},$ an array of non-negative real numbers
such that $t_n\to\infty$ and $A_n = \max_{1\leq i \leq t_n} a_{i,n} \to 0$ as $n\to\infty$. Then, 
\begin{align}
& \sum_{i=1}^{t_n} a_{i,n} \to \tau \geq 0 \label{eq05}   \\  
\intertext{if and only if} 
& \prod_{i=1}^{t_n} (1 - a_{i,n}) \to e^{-\tau}.  \label{eq06}
\end{align}
\end{lemma}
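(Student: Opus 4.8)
The plan is to reduce the equivalence to a statement about the partial sums by passing to logarithms and controlling the error in the approximation $\log(1-t)\approx -t$ uniformly, using the smallness of $A_n$. Since $A_n\to 0$, for all sufficiently large $n$ we have $a_{i,n}\leq 1/2$ for every $1\leq i\leq t_n$, so each factor $1-a_{i,n}$ is positive and $\log\prod_{i=1}^{t_n}(1-a_{i,n})=\sum_{i=1}^{t_n}\log(1-a_{i,n})$ is well defined. The key elementary tool is the two-sided bound $-t-t^2\leq \log(1-t)\leq -t$, valid for $t\in[0,1/2]$, which I would apply termwise with $t=a_{i,n}$.

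Writing $S_n=\sum_{i=1}^{t_n}a_{i,n}$ and $P_n=-\sum_{i=1}^{t_n}\log(1-a_{i,n})=-\log\prod_{i=1}^{t_n}(1-a_{i,n})$, summing the termwise bounds and using $\sum_{i=1}^{t_n}a_{i,n}^2\leq A_n\sum_{i=1}^{t_n}a_{i,n}=A_nS_n$ yields the sandwich
\begin{equation*}
S_n\leq P_n\leq (1+A_n)S_n
\end{equation*}
for all large $n$; the whole argument then rests on this inequality together with $A_n\to 0$. For the forward implication, if \eqref{eq05} holds then $S_n\to\tau$ is bounded, so $(1+A_n)S_n\to\tau$, and the sandwich forces $P_n\to\tau$; since $e^{-\tau}\in(0,1]$ is positive, exponentiating gives $\prod_{i=1}^{t_n}(1-a_{i,n})=e^{-P_n}\to e^{-\tau}$, which is \eqref{eq06}.

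For the converse, suppose \eqref{eq06} holds, so that by continuity of the logarithm at the positive value $e^{-\tau}$ we have $P_n\to\tau$. The delicate point, and the step I expect to be the only real obstacle, is that one cannot immediately invert the factor $(1+A_n)$ without first knowing that $S_n$ is bounded. This boundedness I would extract from the left half of the sandwich, $S_n\leq P_n$, which gives $\limsup_n S_n\leq\tau<\infty$ and hence $A_nS_n\to 0$. The right half then gives $S_n\geq P_n/(1+A_n)$, so $\liminf_n S_n\geq\tau$, and combining the two bounds yields $S_n\to\tau$, which is \eqref{eq05}. The termwise Taylor estimate and the forward direction are routine; the care is concentrated in establishing the boundedness of $S_n$ before the sandwich can be inverted.
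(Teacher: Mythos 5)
Your proof is correct and follows essentially the same route as the paper's: both pass to logarithms and control the remainder in $\log(1-t)\approx -t$ by $\sum_i a_{i,n}^2\leq A_n\sum_i a_{i,n}$, the only difference being that you phrase this as a two-sided sandwich $S_n\leq P_n\leq(1+A_n)S_n$ while the paper writes it as a multiplicative $(1+o(1))$ factor. Your extra care in the converse direction (extracting $\limsup S_n\leq\tau$ before inverting) is a slightly more explicit rendering of the same step.
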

\begin{proof}
Using the fact that $\text{log}(1-x) = -x + R(x)$ where $|R(x)| < Cx^2$, for sufficiently small $x > 0$, for some $C > 0$, we have
$\sum_{i=1}^{t_n}\text{log}(1-a_{i,n}) = -\sum_{i=1}^{t_n}a_{i,n} + \sum_{i=1}^{t_n}R(a_{i,n}) $
and
\begin{equation}
 |\sum_{i=1}^{t_n}R(a_{i,n})| \leq C\sum_{i=1}^{t_n}a_{i,n}^2 \leq CA_n\sum_{i=1}^{t_n}a_{i,n},
\end{equation}
so that $\sum_{i=1}^{t_n} \text{log}(1-a_{i,n}) = -\big(\sum_{i=1}^{t_n}a_{i,n} \big)\big(1 + o(1)\big)$ or equivalently
\begin{equation}
 \text{log}\prod_{i=1}^{t_n}(1-a_{i,n}) = -\big(\sum_{i=1}^{t_n}a_{i,n} \big)\big(1 + o(1)\big),
\end{equation}
from which the result follows.
\end{proof}

\begin{lemma}  \label{L5.2}    \normalfont
Let $\{t_n\}_{n=1}^{\infty}$ be a sequence of positive integers and $a_{i,n}$, $i, n \in \mathbb{N},$ an array of non-negative real numbers
such that $t_n\to\infty$,  $A_n = \max_{1\leq i \leq t_n} a_{i,n} \to 0$ and $\sum_{i=1}^{t_n}a_{i,n}$ is bounded above as $n\to\infty$. Then, 
\begin{equation} 
\prod_{i=1}^{t_n} (1 - a_{i,n}) - \text{exp}\bigg({-\sum_{i=1}^{t_n} a_{i,n}} \bigg)  \to 0.
\end{equation} 
\end{lemma}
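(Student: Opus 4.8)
The plan is to recycle the logarithmic expansion already used in the proof of Lemma~\ref{L5.1}, but to track the remainder term quantitatively rather than pass to a fixed limit. Write $S_n = \sum_{i=1}^{t_n} a_{i,n}$ and note that since $A_n \to 0$ we have $a_{i,n} \le A_n < 1$ for every $i$ once $n$ is large, so each factor $1 - a_{i,n}$ is positive and the logarithm of the product is well defined for all sufficiently large $n$. Using $\log(1-x) = -x + R(x)$ with $|R(x)| < Cx^2$ for small $x>0$, I would write
\[
\log \prod_{i=1}^{t_n}(1 - a_{i,n}) = -S_n + \sum_{i=1}^{t_n} R(a_{i,n}).
\]

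First I would bound the remainder. Exactly as in Lemma~\ref{L5.1},
\[
\Big| \sum_{i=1}^{t_n} R(a_{i,n}) \Big| \le C \sum_{i=1}^{t_n} a_{i,n}^2 \le C A_n \sum_{i=1}^{t_n} a_{i,n} = C A_n S_n.
\]
This is the step where the hypotheses are used: because $\{S_n\}$ is bounded above, say by $B$, and $A_n \to 0$, the quantity $C A_n S_n \le C B A_n$ tends to zero. Hence $\log \prod_{i=1}^{t_n}(1-a_{i,n}) = -S_n + \varepsilon_n$ with $\varepsilon_n \to 0$, and exponentiating gives $\prod_{i=1}^{t_n}(1-a_{i,n}) = e^{-S_n} e^{\varepsilon_n}$.

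The conclusion then follows by writing the target difference as a product of a bounded factor and a null factor:
\[
\prod_{i=1}^{t_n}(1 - a_{i,n}) - e^{-S_n} = e^{-S_n}\big(e^{\varepsilon_n} - 1\big).
\]
Since $S_n \ge 0$ we have $0 < e^{-S_n} \le 1$, so the first factor is bounded, while $e^{\varepsilon_n} - 1 \to 0$ because $\varepsilon_n \to 0$; the product therefore tends to $0$, as required.

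The only real subtlety, and the point that distinguishes this from Lemma~\ref{L5.1}, is that here $S_n$ is merely bounded and need not converge, so one cannot simply invoke continuity of $\exp$ at a fixed limit $\tau$. The resolution is the observation that $e^{-S_n}$ stays in $(0,1]$ regardless of whether $S_n$ converges, so boundedness of this factor, rather than its convergence, is all that is needed; and the boundedness of $S_n$ is precisely what makes the remainder $C A_n S_n$ vanish. I expect no further obstacles, since the estimate on $R$ is identical to the one already established.
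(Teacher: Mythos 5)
Your proof is correct, but it takes a different route from the paper's. The paper disposes of this lemma in one line: since $S_n=\sum_{i=1}^{t_n}a_{i,n}$ is bounded, every subsequence admits a further subsequence along which $S_n$ converges to some $\tau\ge 0$, and along that sub-subsequence Lemma \ref{L5.1} gives $\prod_{i=1}^{t_n}(1-a_{i,n})\to e^{-\tau}$ while $e^{-S_n}\to e^{-\tau}$, so the difference vanishes; as this holds for every subsequence, it holds for the full sequence. You instead avoid subsequence extraction entirely: you reuse the internal estimate from the proof of Lemma \ref{L5.1} (the bound $\lvert\sum_i R(a_{i,n})\rvert\le CA_nS_n\le CBA_n\to 0$) to get $\prod_{i=1}^{t_n}(1-a_{i,n})=e^{-S_n}e^{\varepsilon_n}$ with $\varepsilon_n\to 0$, and then exploit $0<e^{-S_n}\le 1$ to conclude via the factorization $e^{-S_n}(e^{\varepsilon_n}-1)\to 0$. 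Both arguments are sound and rest on the same Taylor-remainder bound; the paper's version is shorter given that Lemma \ref{L5.1} is already on record, whereas yours is self-contained, bypasses the Bolzano--Weierstrass step, and yields an explicit quantitative rate of order $A_n$ for the discrepancy, which is a mild strengthening of the stated conclusion.
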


\begin{proof}
This follows from Lemma \ref{L5.1} by considering subsequences along which $\sum_{i=1}^{t_n} a_{i,n}$ converges. 
\end{proof}
\begin{lemma}  \label{L5.3}  \normalfont
Let $g:\mathbb{R} \to \mathbb{R}$ be a bounded function.  If $f(x) = A(x)g(x)$ and $\text{lim}_{x\to\infty}A(x) = 1,$ then $f(x) = g(x) + o(1)$ as $x\to \infty$.
\end{lemma}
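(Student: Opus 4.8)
The plan is to rewrite the difference $f(x) - g(x)$ as a single product and then exploit the boundedness of $g$ together with the convergence $A(x) \to 1$. Since $f(x) = A(x) g(x)$, I would first compute $f(x) - g(x) = A(x) g(x) - g(x) = (A(x) - 1) g(x)$, so that the problem immediately reduces to showing that $(A(x) - 1) g(x) \to 0$ as $x \to \infty$.

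Next, because $g$ is bounded, there is a constant $M > 0$ with $|g(x)| \leq M$ for all $x \in \mathbb{R}$. Taking absolute values in the identity above gives $|f(x) - g(x)| = |A(x) - 1| \, |g(x)| \leq M |A(x) - 1|$. The hypothesis $\lim_{x\to\infty} A(x) = 1$ says precisely that $|A(x) - 1| \to 0$, whence the upper bound $M|A(x) - 1|$ tends to zero. Therefore $|f(x) - g(x)| \to 0$, which is exactly the assertion $f(x) = g(x) + o(1)$ as $x \to \infty$.

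There is no genuine obstacle here; the argument is a one-line estimate once the factor $A(x) - 1$ is isolated. The only point worth flagging is that the boundedness of $g$ is indispensable, since it is what converts a factor tending merely to $1$ into an error that actually vanishes. Dropping this hypothesis breaks the conclusion — for instance $A(x) = 1 + 1/x$ together with $g(x) = x$ gives $f(x) - g(x) = 1$, which does not go to zero. Thus the whole content of the lemma is the interplay between the vanishing of $A(x) - 1$ and the uniform control of $g$ by its bound.
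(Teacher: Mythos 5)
Your argument is correct and is essentially identical to the paper's proof: both isolate the factor $A(x)-1$ via $f(x)-g(x)=(A(x)-1)g(x)$ and then bound $|f(x)-g(x)|\leq M|A(x)-1|\to 0$ using the bound $M$ on $|g|$ (the paper merely phrases the final step with an explicit $\epsilon$--$x_0$ argument). Nothing further is needed.
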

\begin{proof}
As $g$ is bounded, $\exists M >0$ such that $|g(x)| < M,$ $\forall x \in \mathbb{R}$. 
Now let $\epsilon > 0.$  As  $\text{lim}_{x\to\infty}A(x) = 1,$ $\exists x_0$ such that
\begin{equation*}
|A(x) - 1 | < \epsilon/M \quad \text{for} \,\, x>x_0.
\end{equation*}
Then for $x>x_0$ 
\begin{equation*}
| f(x) - g(x) | = |g(x)||A(x) - 1 | < M\epsilon/M = \epsilon.
\end{equation*}
\end{proof}

\begin{lemma}  \label{L5.4}     \normalfont
Let $\{X_n\}_{n=1}^{\infty}$, $x_n$, $p_n$ and $q_n$ be as in Lemma \ref{lemOB} such that (\ref{ExpExc}) holds and assume 
$\mathbb{P}(M_n \leq x_n) \to L \in (0,1)$.  Let $s_n$ be such that $p_n = o(s_n), s_n = o(n)$ and $t_n = \floor*{n/(s_n + q_n)}$.  Then 
\begin{equation}
\sum_{i=1}^{t_n}  \mathbb{P}(M^i_{s_n-p_n,s_n} > x_n) = o\bigg(\sum_{i=1}^{t_n}\mathbb{P}(M^i_{0,s_n-p_n} > x_n, M^i_{s_n-p_n, s_n} \leq x_n )\bigg) \label{NewCond2}
\end{equation}
where  $M^i_{j,k} = \text{max}\,\{X^i_{j+1},\ldots, X^i_{k} \}$ and $X^i_j = X_{(i-1)\,(s_n+q_n)+j}$.
\end{lemma}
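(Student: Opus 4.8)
The plan is to sandwich the two sums by means of the coarse $s_n$-blocks, showing that the left-hand side vanishes while the right-hand sum tends to a strictly positive constant. First I would bound the left-hand side directly. By a union bound over the $p_n$ terminal positions of each block together with the common marginal, $\mathbb{P}(M^i_{s_n-p_n,s_n} > x_n) \le p_n \bar{F}(x_n)$, so the left-hand side is at most $t_n p_n \bar{F}(x_n)$. Since $t_n = \lfloor n/(s_n+q_n)\rfloor \sim n/s_n$ (using $q_n = o(s_n)$) and $n\bar{F}(x_n)\to\tau$ by (\ref{ExpExc}), this is asymptotic to $\tau p_n/s_n$, which tends to $0$ because $p_n = o(s_n)$. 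Hence the left-hand side converges to $0$.

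The crux is to show the right-hand sum converges to a strictly positive constant. To this end I would first establish that the coarse decomposition into the $s_n$-blocks $I_i$ (positions $(i-1)(s_n+q_n)+1$ through $(i-1)(s_n+q_n)+s_n$), which are separated by $q_n$, falls under the separation lemma \ref{SepLem}: one has $|I_i| = s_n \ge q_n$ eventually, and the error bound in (\ref{SepLemEqn}) with $k = t_n$ is $(t_n-1)\alpha_n + 2(t_n-2)q_n\bar{F}(x_n)$. Both terms vanish, since $t_n\alpha_n \sim (n/s_n)\alpha_n = o(1)$ because $n\alpha_n = o(p_n) = o(s_n)$ by (\ref{lemOBeq1}) and $p_n=o(s_n)$, while $t_n q_n \bar{F}(x_n) \sim \tau q_n/s_n \to 0$ as $q_n = o(s_n)$. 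Together with the observation that an exceedance in the discarded gap and boundary region has probability $o(1)$, exactly as in the argument behind Lemma \ref{lemOB}, this yields $\mathbb{P}(M_n \le x_n) - \prod_{i=1}^{t_n}\mathbb{P}(M^i_{0,s_n}\le x_n) \to 0$, so that $\prod_{i=1}^{t_n}\mathbb{P}(M^i_{0,s_n}\le x_n) \to L$.

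Next I would convert this product statement into an additive one via Lemma \ref{L5.1}, applied with $a_{i,n} = \mathbb{P}(M^i_{0,s_n} > x_n)$. The hypotheses hold: $t_n \to \infty$ since $s_n = o(n)$, and $\max_i a_{i,n} \le s_n \bar{F}(x_n) \sim \tau s_n/n \to 0$. Writing $L = e^{-\tau^*}$ with $\tau^* = -\log L \in (0,\infty)$, since $L\in(0,1)$, Lemma \ref{L5.1} gives $\sum_{i=1}^{t_n}\mathbb{P}(M^i_{0,s_n} > x_n) \to \tau^* > 0$. I would then split each block event according to whether its terminal $p_n$-window carries an exceedance: the events $\{M^i_{s_n-p_n,s_n} > x_n\}$ and $\{M^i_{0,s_n-p_n} > x_n, M^i_{s_n-p_n,s_n}\le x_n\}$ are disjoint with union $\{M^i_{0,s_n} > x_n\}$, so summing over $i$ shows the right-hand sum equals $\sum_i \mathbb{P}(M^i_{0,s_n} > x_n)$ minus the left-hand side, hence converges to $\tau^* > 0$. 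Combining with the first paragraph, the left-hand side is $o(1)$ while the right-hand sum tends to the positive limit $\tau^*$, which is exactly the claimed relation (\ref{NewCond2}).

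I expect the main obstacle to be the second paragraph: carefully justifying that the coarse $s_n$-block decomposition still satisfies the separation lemma and that the ignored gap and boundary contributions are negligible, that is, re-running the estimate underlying Lemma \ref{lemOB} with $s_n$ in place of $p_n$ and verifying that each error term vanishes under the stated rate assumptions. Everything else reduces to union bounds and a single application of Lemma \ref{L5.1}.
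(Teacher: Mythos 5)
Your proposal is correct and follows essentially the same route as the paper's own proof: re-run Lemma \ref{lemOB} with $s_n$-blocks to get $\prod_{i=1}^{t_n}\mathbb{P}(M(A_i)\leq x_n)\to L$, convert the product to the sum $\sum_{i=1}^{t_n}\mathbb{P}(M(A_i)>x_n)\to-\log L>0$ (you invoke Lemma \ref{L5.1}, the paper expands the logarithm directly --- same content), decompose each block event into the disjoint union, and kill the terminal-window sum via the bound $t_np_n\bar{F}(x_n)\to 0$. The only differences are presentational (you bound the left-hand side first and check the separation-lemma hypotheses in slightly more detail), so there is nothing to add.
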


\begin{proof}
We first note that Lemma \ref{lemOB} also holds with blocks of length $s_n$, i.e., with $s_n$ in place of $p_n$ in the definition of $A_i$ in equation (\ref{A_i}) and $t_n$ in place of $r_n$.  Thus from equation (\ref{eq2.5}), with blocks of length $s_n$, we have that 
$ \mathbb{P}(M_n \leq x_n) - \prod_{i=1}^{t_n}\mathbb{P}(M(A_i) \leq x_n) \to 0$ so that $\prod_{i=1}^{t_n}\mathbb{P}(M(A_i) \leq x_n) \to L \in (0,1),$ or equivalently
\begin{equation}
 \sum_{i=1}^{t_n}\text{log}\big(1 - \mathbb{P}(M(A_i) > x_n)\big)\to \text{log}(L).  \label{SumForm}
\end{equation}
Now we note that $\text{max}_{1\leq i \leq t_n}\mathbb{P}(M(A_i) > x_n) \to 0$ since $\mathbb{P}(M(A_i) > x_n) \leq s_n\bar{F}(x_n)$ and $s_n=o(n)$ and $\bar{F}(x_n) = \tau\,n^{-1}+o(n^{-1})$.  Thus, using $\text{log}(1-t) = -t + o(t)$ as $t\to0$, (\ref{SumForm}) may be written 
\begin{equation}
 -\sum_{i=1}^{t_n}\mathbb{P}(M(A_i) > x_n) + \sum_{i=1}^{t_n}o(\mathbb{P}(M(A_i) > x_n))  \to \text{log}(L).  \label{SumForm2}
\end{equation}
Now it is easily seen that the second sum in $(\ref{SumForm2})$ converges to zero since 
\begin{align*}
\sum_{i=1}^{t_n} o(\mathbb{P}(M(A_i) > x_n)) = \sum_{i=1}^{t_n}o\big(s_n\bar{F}(x_n)\big) & \leq t_n\,s_n\bar{F}(x_n)o(1) \\
& \leq\frac{s_n}{s_n + q_n}n\bar{F}(x_n)o(1) \to 0,
\end{align*}
and so (\ref{SumForm2}) implies $\sum_{i=1}^{t_n}\mathbb{P}(M(A_i) > x_n) \to -\text{log}(L).$  Now, decomposing the event $\{M(A_i) > x_n\}$ as a disjoint union we get 
\begin{equation}
\sum_{i=1}^{t_n} \mathbb{P}(M^i_{0,s_n-p_n} > x_n, M^i_{s_n-p_n,s_n} \leq x_n) + \sum_{i=1}^{t_n}  \mathbb{P}(M^i_{s_n-p_n,s_n} > x_n) \to -\text{log}(L). \label{SumForm3}
\end{equation}
Again, the second sum in (\ref{SumForm3}) goes to zero since it is bounded above by  $t_n p_n\bar{F}(x_n) \leq 
\{p_n/(s_n+q_n)\}n\bar{F}(x_n) \to 0,$ from which (\ref{NewCond2}) follows. 
\end{proof}

\subsection{Proof of Lemma \ref{SepLem}.}

We use induction on the number of subintervals, $k$.  The case $k=2$ is just the fact that $\{X_n\}_{n=1}^{\infty}$ satisfies AIM($x_n$).  Assuming that the result is true for $k$ such arbitrary subintervals,
 we will verify it also holds for the $k+1$ intervals $I_1, I_2,\ldots,I_k, I_{k+1}$. 
Let $I_1^*$ be the interval separating $I_1$ and $I_2$ and let $J= I_1 \cup I_1^* \cup I_2$, and we note that $J$ is an interval with $|J| > q_n$
and since $\{(M(J \cup  \cup_{i=3}^{k+1}I_i) \leq x_n \} \subseteq \{M(\cup_{i=1}^{k+1}I_i)\leq x_n)\}$ we have,
\begin{equation}
0 \leq \mathbb{P}(M(\cup_{i=1}^{k+1}I_i) \leq x_n)  - \mathbb{P}(M\big(J \cup\cup_{i=3}^{k+1}I_i\big) \leq x_n) \leq \mathbb{P}(M(I_1^*) > x_n) \leq q_n\bar{F}(x_n),
\end{equation}
so we may write $\mathbb{P}(M(\cup_{i=1}^{k+1}I_i) \leq x_n) =  \mathbb{P}(M\big(J \cup\cup_{i=3}^{k+1}I_i\big) \leq x_n) + R_{1,n}$ where the remainder $R_{1,n}$ satisfies 
$R_{1,n} \leq q_n\bar{F}(x_n)$. We then have 
\begin{align}
& \big|\mathbb{P}(M(\cup_{i=1}^{k+1}I_i) \leq x_n) - \prod_{i=1}^{k+1} \mathbb{P}(M(I_i) \leq x_n) \big|  \label{ref0}\\
=\,\, & \big|\mathbb{P}(M\big(J \cup\cup_{i=3}^{k+1}I_i\big) \leq x_n) - \prod_{i=1}^{k+1} \mathbb{P}(M(I_i) \leq x_n) + R_{1,n} \big| \\
=\,\, & \big|\mathbb{P}(M(J)\leq x_n)\prod_{i=3}^{k+1} \mathbb{P}(M(I_i) \leq x_n) - \prod_{i=1}^{k+1} \mathbb{P}(M(I_i) \leq x_n) +  R_{1,n} +  R_{2,n} \big| \label{ref1}    \\
\leq \,\, & \big|\mathbb{P}(M(J)\leq x_n) - \mathbb{P}(M(I_1)\leq x_n)\mathbb{P}(M(I_2)\leq x_n)\big| + |R_{1,n}| + |R_{2,n}|   \label{ref2}
\intertext{where $|R_{2,n}| \leq (k-1)\alpha_n + 2(k-2)q_n\bar{F}(x_n)$ and we have used our induction hypothesis to get (\ref{ref1}) since $J \cup\cup_{i=3}^{k+1}I_i$ is a union of $k$
intervals with adjacent intervals separated by $q_n$.  Now note that since $\{M(J)\leq x_n\} \subseteq \{M(I_1\cup I_2)\leq x_n\}$ we have $0 \leq \mathbb{P}(M(I_1\cup I_2)\leq x_n) - \mathbb{P}(M(J)\leq x_n) \leq q_n\bar{F}(x_n)$ and we may write $\mathbb{P}(M(J)\leq x_n) = \mathbb{P}(M(I_1\cup I_2)\leq x_n) + R_{3,n}$ where $|R_{3,n}| \leq q_n\bar{F}(x_n)$.  Thus from 
(\ref{ref0}) and (\ref{ref2}) we have}
& \big|\mathbb{P}(M(\cup_{i=1}^{k+1}I_i) \leq x_n) - \prod_{i=1}^{k+1} \mathbb{P}(M(I_i) \leq x_n) \big|  \notag \\
\leq \,\, & \big|\mathbb{P}(M(I_1\cup I_2)\leq x_n) - \mathbb{P}(M(I_1)\leq x_n)\mathbb{P}(M(I_2)\leq x_n) + R_{3,n} \big| + |R_{1,n}| + |R_{2,n}|  \notag \\
\leq \,\, & \big|\mathbb{P}(M(I_1\cup I_2)\leq x_n) - \mathbb{P}(M(I_1)\leq x_n)\mathbb{P}(M(I_2)\leq x_n) \big| + |R_{1,n}| + |R_{2,n}| + |R_{3,n}|  \notag \\ 
\leq \,\, & \alpha_n + q_n\bar{F}(x_n) + (k-1)\alpha_n + 2(k-2)q_n\bar{F}(x_n) + q_n\bar{F}(x_n) \notag \\
= \,\, & k\alpha_n + 2(k-1)q_n\bar{F}(x_n)  \notag
\end{align}
as required. 

\subsection{Proof of Lemma \ref{lemOB}.}
As $\{M_n \leq x_n\} \subseteq \cap_{i=1}^{r_n}\{M(A_i)\leq x_n\}$ we have
\begin{align}
0 \leq \mathbb{P}(M(\cup_{i=1}^{r_n}A_i) \leq x_n) - \mathbb{P}(M_n \leq x_n) & \leq \mathbb{P}(M(\cup_{i=1}^{r_n}A_i^*) > x_n)  \notag \\
& \leq r_n q_n \bar{F}(x_n)  \notag  \\
& \leq n(p_n+q_n)^{-1}q_n\{\tau\ n^{-1}+o(n^{-1})\} \to 0.  \label{OB1}
\end{align}
Also, by Lemma \ref{SepLem} we have 
\begin{equation}
\big|\mathbb{P}(M(\cup_{i=1}^{r_n}A_i) \leq x_n) - \prod_{i=1}^{r_n} \mathbb{P}(M(A_i) \leq x_n) \big| \leq (r_n-1)\alpha_n + 2(r_n-2)q_n\bar{F}(x_n) \to 0 \label{OB2}
\end{equation}
so that the triangle inequality and (\ref{OB1}) and (\ref{OB2}) give the result. 

\subsection{Proof of Theorem \ref{MainThm}.}

In addition to the notation defined in Section \ref{Sec2.1}, we also define
\begin{equation}
	  X^i_j = X_{(i-1)\,(p_n+q_n)+j}, \,\,\, M^i_{j,k} = \text{max}\,\{X^i_{j+1},\ldots, X^i_{k} \}.  \label{ExtraNotation} 
\end{equation}
Now, for $i = 1,\ldots,r_n,$ we have
\begin{align*}
  \mathbb{P}(M(A_i) \leq x_n) &= 1 - \mathbb{P}(M(A_i) > x_n) \\
                           &= 1 - \sum_{j=1}^{p_n} \mathbb{P}(X^i_j > x_n, M^i_{j,p_n} \leq x_n) \\
                           &\leq 1 - \sum_{j=1}^{p_n} \mathbb{P}(X^i_j > x_n, M^i_{j,j+p_n} \leq x_n) \\
                           &\leq \text{exp}\bigg\{-\sum_{j=1}^{p_n} \mathbb{P}(X^i_j > x_n, M^i_{j,j+p_n} \leq x_n)\bigg\}
\end{align*}
and so 
\begin{align}
  \mathbb{P}(M_n \leq x_n) &= \prod_{i=1}^{r_n} \mathbb{P}(M(A_i) \leq x_n) + o(1) \notag \\
                           &\leq \text{exp}\bigg\{-\sum_{i=1}^{r_n}\sum_{j=1}^{p_n} \mathbb{P}(X^i_j > x_n, M^i_{j,j+p_n} \leq x_n) \bigg\}  + o(1).  \label{eq6.12}
\end{align}
Now we note that
\begin{equation}
\sum_{j=1}^{n} \mathbb{P}(X_j> x_n, M_{j,j+p_n} \leq x_n) =  \sum_{i=1}^{r_n}\sum_{j=1}^{p_n}\mathbb{P}(X^i_j > x_n, M^i_{j,j+p_n} \leq x_n)  + o(1) \label{eq6.13}
\end{equation}
since the difference between the two sums is 
\begin{align*} 
  \sum_{j=1}^{n} \mathbb{P}(X_j> x_n, M_{j,j+p_n} \leq x_n) & -  \sum_{i=1}^{r_n}\sum_{j=1}^{p_n}\mathbb{P}(X^i_j > x_n, M^i_{j,j+p_n} \leq x_n) \\
  &= \sum_{i=1}^{r_n}\sum_{j=p_n+1}^{p_n+q_n} \mathbb{P}(X^i_j > x_n, M_{j,j+p_n} \leq x_n) + o(1) \\
  & \leq r_n\,q_n\,\bar{F}(x_n)  + o(1) \\
  &\leq \frac{q_n}{p_n+q_n}\, n\,\bar{F}(x_n) + o(1) \rightarrow 0
\end{align*}
so that (\ref{eq6.12}) gives
\begin{equation}  \label{eq6.14}
  \mathbb{P}(M_n \leq x_n)  \leq  \text{exp}\bigg\{-\sum_{j=1}^{n} \mathbb{P}(X_j > x_n, M_{j,j+p_n} \leq x_n) \bigg\} + o(1).
\end{equation}

Now we prove the reverse inequality of (\ref{eq6.14}), i.e.,
\begin{equation}  \label{RevIneq}
  \mathbb{P}(M_n \leq x_n)  \geq \text{exp}\bigg\{-\sum_{j=1}^{n} \mathbb{P}(X_j > x_n, M_{j,j+p_n} \leq x_n) \bigg\} + o(1).
\end{equation}
We will show that (\ref{RevIneq}) holds under the assumption that $\mathbb{P}(M_n \leq x_n)$ converges to some $L$ in $[0,1]$, with the more general case following by considering subsequences along which $\mathbb{P}(M_n \leq x_n)$ converges and repeating the following argument.
By Lemma \ref{lemOB}, specifically equation (\ref{eq2.5}), and Lemma \ref{L5.2} with $a_{i,n} = \mathbb{P}(M(A_i) > x_n)$ we see that $L > 0$, and since (\ref{RevIneq}) trivially holds when $L=1$ we may assume $L\in(0,1)$. 
Following \cite{O'Brien87}, introduce a new sequence
$s_n = o(n)$ which plays the role of $p_n$ such that $p_n = o(s_n)$
and let $t_n = \floor*{n/(s_n + q_n)}$ which now plays the
role of $r_n$ and note that $t_n = o(r_n)$ and the definitions in
(\ref{ExtraNotation}) and (\ref{A_i}) are modified by replacing $p_n$
with $s_n$.  
 Then for $i = 1,\ldots t_n$, we have 
\begin{align*}
  \mathbb{P}(M(A_i) > x_n) &= \mathbb{P}(M^i_{0,s_n-p_n} > x_n, M^i_{s_n-p_n,s_n} \leq x_n) + \mathbb{P}(M^i_{s_n-p_n,s_n} > x_n) 
\end{align*}
and consequently, since Lemma \ref{lemOB} holds with $s_n$ in place of $p_n$ and $t_n$ in place of $r_n,$ 
\begin{align}
  \mathbb{P}(M_n \leq x_n) &= \prod_{i=1}^{t_n} \mathbb{P}(M(A_i)\leq x_n) + o(1) =
 \prod_{i=1}^{t_n}\bigg\{1 - \mathbb{P}(M(A_i) > x_n)\bigg\} + o(1)  \notag \\
&= \prod_{i=1}^{t_n}\bigg\{1 - \mathbb{P}(M^i_{0,s_n-p_n}>x_n, M^i_{s_n-p_n,s_n}\leq x_n) - \mathbb{P}(M^i_{s_n-p_n, s_n} > x_n)\bigg\} + o(1).  \label{ProdForm}
\end{align}
Now, with $a_{i,n} = \mathbb{P}(M^i_{0,s_n-p_n}>x_n, M^i_{s_n-p_n,s_n}\leq x_n) + \mathbb{P}(M^i_{s_n-p_n, s_n} > x_n) $ we have,
for all $i$, $a_{i,n} \leq s_n\bar{F}(x_n)$ and so $\text{max}_{1\leq i \leq t_n}a_{i,n} \leq s_n\bar{F}(x_n) \to 0$ as $s_n=o(n)$ and $\bar{F}(x_n) = \tau\,n^{-1} + o(n^{-1})$. Also, $\sum_{i=1}^{t_n}a_{i,n} \leq t_n\text{max}_{1\leq i \leq t_n}a_{i,n} \leq s_n(s_n+q_n)^{-1}n\bar{F}(x_n)$ which is bounded above as $n\to\infty$.
Thus we may apply Lemma \ref{L5.2} to (\ref{ProdForm}) to get 
\begin{align}
\mathbb{P}(M_n \leq x_n) &= \text{exp}\bigg\{-\sum_{i=1}^{t_n}  \bigg(\mathbb{P}(M^i_{0,s_n-p_n}>x_n, M^i_{s_n-p_n,s_n}\leq x_n) + \mathbb{P}(M^i_{s_n-p_n, s_n} > x_n) \bigg)\bigg\} + o(1)  \notag  \\
&= \text{exp}\bigg\{-\bigg( \sum_{i=1}^{t_n}\mathbb{P}(M^i_{0,s_n-p_n}>x_n, M^i_{s_n-p_n,s_n}\leq x_n)\bigg)\bigg(1 + o(1) \bigg) \bigg\} + o(1) \label{ExpForm} 
\end{align}
with (\ref{ExpForm}) following from Lemma \ref{L5.4}.   Now Lemma \ref{L5.3} reduces (\ref{ExpForm}) to
\begin{align}
\mathbb{P}(M_n \leq x_n) &= \text{exp}\bigg\{-\sum_{i=1}^{t_n}\mathbb{P}(M^i_{0,s_n-p_n}>x_n, M^i_{s_n-p_n,s_n}\leq x_n) \bigg\} + o(1) \label{RevSetup}  \\
& \geq \text{exp}\bigg\{-\sum_{i=1}^{t_n}\sum_{j=1}^{s_n}\mathbb{P}(X^i_j > x_n, M^i_{j,j+p_n} \leq x_n)  \bigg\} + o(1) \label{RevIneq2}
\end{align}
with (\ref{RevIneq2}) following from (\ref{RevSetup}) by the inclusions $\{ M^i_{0, s_n-p_n} > x_n, M^i_{s_n-p_n,s_n} \leq x_n\}  \subseteq \, \bigcup_{j=1}^{s_n-p_n}\{X^i_j > x_n, M^i_{j,j+p_n} \leq x_n \} \subseteq \, \bigcup_{j=1}^{s_n}\{X^i_j > x_n, M^i_{j,j+p_n} \leq x_n \}$ and the union bound.
A similar argument that was used to show (\ref{eq6.13}) gives 
\begin{equation}
  \sum_{j=1}^{n} \mathbb{P}(X_j > x_n, M_{j,j+p_n} \leq x_n) =  \sum_{i=1}^{t_n}\sum_{j=1}^{s_n}\mathbb{P}(X^i_j > x_n, M^i_{j,j+p_n} \leq x_n)  + o(1) \label{eq6.17}
\end{equation}
so that (\ref{RevIneq2}) becomes
\begin{equation}  \label{eq6.18}
  \mathbb{P}(M_n \leq x_n)  \geq  \text{exp}\bigg\{-\sum_{j=1}^{n} \mathbb{P}(X_j > x_n, M_{j,j+p_n} \leq x_n) \bigg\} + o(1)
\end{equation}
and so  (\ref{eq6.14}) and (\ref{eq6.18}) together prove (\ref{result1}). Also, since
\begin{align*}
  \text{exp}\bigg\{-\sum_{j=1}^{n} \mathbb{P}(X_j > x_n, M_{j,j+p_n} \leq x_n) \bigg\} = \bigg[\text{exp}\big\{-n\bar{F}(x_n) \big\}\bigg]^{\gamma_n}
\end{align*}
with
$\gamma_n = n^{-1}\sum_{j=1}^{n}\mathbb{P}(M_{j,j+p_n} \leq x_n \mid X_j > x_n)$, this also gives (\ref{result2}).

\subsection{Proof of Theorem \ref{UniformConvThm}.}  \label{UniformConvThmPf}
Throughout we let $\theta_{i,n} = \mathbb{P}(M_{i,i+p_n} \leq x_n \mid X_i > x_n)$. From Corollary (\ref{EasyCor}) we know that 
$\mathbb{P}(M_n \leq x_n) \to e^{-\tau\gamma'}$ with $\gamma' = \lim_{n\to\infty}n^{-1}\sum_{i=1}^{n}\theta_{i,n}$ which is easily seen to converge to
 the same value as $\lim_{n\to\infty}n^{-1}\sum_{i=1}^{n}\theta_i$ since 
\begin{align}
| n^{-1}\sum_{i=1}^{n}\theta_{i} - n^{-1}\sum_{i=1}^{n}\theta_{i,n} | & \leq n^{-1} \sum_{i=1}^{n}|\theta_{i,n} - \theta_i |  \notag \\
& \leq \text{max}_{1\leq i \leq n} |\theta_{i,n} - \theta_i| \to 0.  \label{ToZero}
\end{align}
This establishes the first part of the Theorem.

To show that, $\mathbb{P}(M_n \leq y_n) \to e^{-\tau'\gamma}$, define $n' =  \floor*{ (\tau'/\tau)n}$ so that $n'\bar{F}(x_n) \to \tau'$ or equivalently
$F(x_n)^{n'} \to e^{-\tau'}$.  Then,
\begin{align}
| \mathbb{P}(M_{n\textquotesingle} \leq x_n) - \mathbb{P}(M_{n\textquotesingle} \leq y_{n\textquotesingle}) |  & \leq 
 n\textquotesingle | F(x_n) - F(y_{n\textquotesingle}) |   \notag  \\
& = n\textquotesingle | \bar{F}(x_n) - \bar{F}(y_{n\textquotesingle}) | \to 0.  \label{ProbDiff}
\end{align}
Since $n' \leq n$ we have by Theorem \ref{MainThm}, $\mathbb{P}(M_{n'} \leq x_n) = F(x_n)^{n'\gamma_{n}'}$ where
 $\gamma_{n}' =  (n')^{-1}\sum_{i=1}^{n'} \theta_{i,n}$ and $\gamma_{n}' $ also has limiting value $\lim_{n\to\infty}n^{-1}\sum_{i=1}^{n}\theta_i $ since
$| (n')^{-1}\sum_{i=1}^{n'} \theta_{i,n} - (n')^{-1}\sum_{i=1}^{n'} \theta_i | \to 0$ as in (\ref{ToZero}). Then, since $F(x_n)^{n'} \to e^{-\tau'}$, we have
$\mathbb{P}(M_{n'} \leq x_n) \to e^{-\tau'\gamma}$ and so from (\ref{ProbDiff}), $\mathbb{P}(M_n \leq y_n) \to  e^{-\tau'\gamma}$ also with
$\gamma$ as in (\ref{CesaroMean}). 

\subsection{Proof of Theorem \ref{PeriodicThm}.} 
The first step in the proof is to show that  we have, for each integer $k\geq 1$, 
\begin{equation}
\mathbb{P}(M_n \leq x_n) - \mathbb{P}^k(M_{n'} \leq x_n) \to 0  \label{FirstStep}
\end{equation}
where $n' = \floor*{n/k}$. To do this we define intervals $I_i$ and $I_i^*$, $1\leq i \leq k,$ of alternating large and small lengths as follows,
\begin{equation}
I_i = \{(i-1)n' + 1, \ldots, in' - q_n\}, \quad I_i^* = \{in'-q_n+1, \ldots, in'\}.
\end{equation}
We show that 
\begin{align}
&  \quad \big| \mathbb{P}(M_n \leq x_n) -  \mathbb{P}(M(\cup_{i=1}^{k}I_i) \leq x_n)\big| \to 0, \label{step1} \\
&  \quad \big| \mathbb{P}(M(\cup_{i=1}^{k}I_i) \leq x_n) - \prod_{i=1}^{k}\mathbb{P}(M(I_i)\leq x_n)\big| \to 0,  \label{step2} \\
&  \quad \big| \prod_{i=1}^{k}\mathbb{P}(M(I_i)\leq x_n) - \prod_{i=1}^{k}\mathbb{P}(M(I_i\cup I_i^*)\leq x_n) \big| \to 0,  \label{step3} \\
\intertext{and} &  \quad \big| \prod_{i=1}^{k}\mathbb{P}(M(I_i\cup I_i^*)\leq x_n) - \mathbb{P}^k(M_{n'} \leq x_n)\big| \to 0 \label{step4},
\end{align}
from which (\ref{FirstStep}) follows by the triangle inequality.

(\ref{step1}) follows from $\{M_n \leq x_n\} \subseteq \{M(\cup_{i=1}^{k}I_i) \leq x_n\}$ and so
 $\mathbb{P}(M(\cup_{i=1}^{k}I_i) \leq x_n) - \mathbb{P}(M_n \leq x_n) \leq \mathbb{P}(\cup_{i=1}^{k}\{M(I_i^*) > x_n\}) \leq kq_n\bar{F}(x_n) \to 0$ since $q_n = o(n)$ 
and $\bar{F}(x_n) = \tau/n + o(n^{-1}).$ 

(\ref{step2}) follows immediately from Lemma \ref{SepLem} as $I_j, 1\leq j \leq k,$ are distinct subintervals of $\{1,\ldots, n\}$, and $I_i$ and
$I_{i+1}$ are separated by $q_n$.  

(\ref{step3}) follows from $\{M(I_i \cup I_i^*) \leq x_n \} \subseteq \{M(I_i) \leq x_n\}$ and $0\leq \mathbb{P}(M(I_i) \leq x_n) - \mathbb{P}(M(I_i \cup I_i^*) \leq x_n) \leq q_n\bar{F}(x_n) \to 0$ so that $ \mathbb{P}(M(I_i) \leq x_n) = \mathbb{P}(M(I_i \cup I_i^*) \leq x_n) + o(1)$ for $1\leq i \leq k$. 

For (\ref{step4}), we first note that $\mathbb{P}(M(I_1\cup I_1^*)\leq x_n) = \mathbb{P}(M_n' \leq x_n).$ Since $I_i\cup I_i^*$ is an interval of length $n'$, $1\leq i\leq k$,
 we have by periodicity that
$M(I_i\cup I_i^*) \overset{D}{=} M_{r,r+n'}$ for some $r\in\{0,1,\ldots, d-1\}.$ Then for $2 \leq i \leq k$, we have
\begin{align*}
 | \mathbb{P}(M(I_i\cup I_i^*)) - \mathbb{P}(M(I_1\cup I_1^*)) | &= | \mathbb{P}(M_{r,r+n'} \leq x_n) - \mathbb{P}(M_n' \leq x_n) | \\
& \leq \,\, | \mathbb{P}(M_{r+n'} \leq x_n) -  \mathbb{P}(M_n' \leq x_n) |\,\, + \\
& \phantom{==} | \mathbb{P}(M_{r,r+n'} \leq x_n) - \mathbb{P}(M_{r+n'} \leq x_n) |  \\
& \leq \,\, r\bar{F}(x_n) + r\bar{F}(x_n) \\
& \leq 2d\bar{F}(x_n) \to 0.
\end{align*} 
Hence $\prod_{i=1}^{k}\mathbb{P}(M(I_i\cup I_i^*)\leq x_n) = \prod_{i=1}^{k}\mathbb{P}(M(I_1\cup I_1^*)\leq x_n) + o(1) =   \mathbb{P}^k(M_{n'} \leq x_n) + o(1).$
 which establishes (\ref{FirstStep}). 

Now we note that since $\{X_n\}_{n=1}^{\infty}$ satisfies AIM($x_n$) with $n\bar{F}(x_n) \to \tau$, it also satisfies AIM($y_n)$ whenever 
$n\bar{F}(y_n) \to \tau' \leq \tau$. This follows in the exact same way as for the $D(x_n)$ condition, see, e.g., Lemma 3.6.2.\ in \cite{leadling83}.  This fact together with
(\ref{FirstStep}) allows the proof to proceed in exactly the same manner as the proof of Theorem 3.7.1.\ in \cite{leadling83}.  

\subsection{Proof of Theorem \ref{IntExcThm}.} 
For $n, q \in \mathbb{N}$ and $u\in\mathbb{R}$, we define the mixing coefficients $\alpha_{n,q}(u)$ by
\begin{equation*}
\alpha_{n,q}(u) = \text{max} \mid\mathbb{P}\big(M(I_1 \cup I_2) \leq u \big) - \mathbb{P}\big(M(I_1) \leq u\big)\mathbb{P}\big(M(I_2) \leq u\big)\mid
\end{equation*}
where the maximum is taken over intervals $I_1$ and $I_2$ that are separated by $q$, with $\text{min}\{|I_1|, |I_2|\} \geq q$, $\text{min}\{\text{min}(I_1), \text{min}(I_2)\} \geq 1$ and max$\{\text{max}(I_1), \text{max}(I_2)\} \leq n$.

We first note that since both $q_n = o(n)$ and $0 \leq \alpha_n = \alpha_{n,q_n}(x_n) \leq \alpha^*_{n,q_n}(x_n)   \to 0$, we can find a sequence of positive integers $p_n = o(n)$ such that $n\alpha_n = o(p_n)$ and 
$q_n = o(p_n)$ so that the conditions of Theorem \ref{MainThm} are satisfied.

Let $t>0$ and write $k_n = \floor*{t/\bar{F}(x_n)} \sim tn/\tau$ so that for sufficiently large $n,$ $k_n > p_n + q_n$.  
Now, fix $i\in \mathbb{N}$.  For sufficiently large $n$ we have
\begin{align*}
& \mathbb{P}(M_{i+p_n,i+p_n+q_n} > x_n \mid X_i > x_n) \leq q_n\bar{F}(x_n) + \alpha^*_{n,q_n}(x_n)  \rightarrow 0  \notag
\intertext{so that}
& \mathbb{P}(M_{i,i+k_n} \leq x_n \mid X_i > x_n) = \mathbb{P}(M_{i,i+p_n} \leq x_n, M_{i+p_n+q_n,i+k_n} \leq x_n \mid X_i > x_n)  + o(1).  
\end{align*}
In a similar way, since $\{ M_{i+k_n} \leq x_n \}  \subseteq \{ M_{i+p_n+q_n, i + k_n} \leq x_n \},$ we have
\begin{align*}
\mathbb{P}( M_{i+p_n+q_n, i + k_n} \leq x_n ) - \mathbb{P}(M_{i+k_n} \leq x_n )  &= \mathbb{P}(M_{i+p_n+q_n} > x_n, M_{i+p_n+q_n,i+k_n} \leq x_n)  \\
& \leq (i+p_n+q_n)\bar{F}(x_n) \rightarrow 0,
\end{align*}
so that $\mathbb{P}(M_{i+p_n+q_n,i+k_n} \leq x_n) = \mathbb{P}(M_{i+k_n} \leq x_n) + o(1).$  Now we can derive the limiting distribution of $\bar{F}(x_n)T_i(x_n)$.  We have
\begin{align}
  \mathbb{P}(\bar{F}(x_n)T_i(x_n) > t) &= \mathbb{P}(T_i(x_n) > k_n)  = \mathbb{P}( X_{i+1} \leq x_n,\ldots X_{i+k_n} \leq x_n  \mid X_i > x_n)  \notag \\
                                       & =\mathbb{P}(M_{i,i+k_n} \leq x_n \mid X_i > x_n)    \notag \\    
                                       & =  \mathbb{P}(M_{i,i+p_n} \leq x_n, M_{i+p_n+q_n,i+k_n} \leq x_n \mid X_i > x_n)  + o(1) \notag \\    
                                       & = \mathbb{P}(M_{i,i+p_n} \leq x_n \mid X_i> x_n)\mathbb{P}(M_{i+p_n+q_n,i+k_n} \leq x_n \mid X_i > x_n, M_{i,i+p_n} \leq x_n)+o(1)   \notag \\   
                                       & = \big\{\theta_i + o(1) \big\}\big\{\mathbb{P}(M_{i+k_n} \leq x_n) + o(1) \big\} + o(1).   \label{LastEqn}
\end{align}
Now we focus on the term $\mathbb{P}(M_{i+k_n} \leq x_n)$ appearing in (\ref{LastEqn}).  Since $\mathbb{P}(M_{k_n} \leq x_n) - \mathbb{P}(M_{i+k_n} \leq x_n) \leq i\bar{F}(x_n)$
we have $\mathbb{P}(M_{i+k_n} \leq x_n) = \mathbb{P}(M_{k_n} \leq x_n)+ o(1).$  Since $k_n = O(n)$ we have $\alpha^*_{k_n,q_n}(x_n) \to 0$ and so
$\alpha_{k_n,q_n}(x_n) \to 0$ also.  Thus we may find a sequence $p_n^{\prime} = o(n)$ such that $k_n\alpha_{k_n,q_n} = o(p_n^{\prime})$ and $q_n = o(p_n^{\prime})$, 
e.g., we may take $p_n^{\prime} =   \floor*{(k_n\, \text{max}\{q_n, k_n\alpha_{k_n,q_n}(x_n)\})^{1/2}}.$  Then applying Theorem \ref{MainThm} to the first $k_n$ terms we get
$\mathbb{P}(M_{k_n} \leq x_n) = F(x_n)^{k_n\gamma_n^{\prime}}$ where $\gamma_n^{\prime} = k_n^{-1}\sum_{j=1}^{k_n}\theta_{j,n}^{\prime}$ with
$\theta_{j,n}^{\prime} = \mathbb{P}(M_{j,j+p_n^{\prime}} \leq x_n \mid X_j > x_n)$.

We now verify that $\gamma_n^{\prime}$ has limiting value $\gamma = \lim_{n\to\infty}n^{-1}\sum_{j=1}^{n}\theta_j$. Define sequences $a_n$, $b_n$ and $k_n^{\prime}$ by
$a_n = \text{max}\{p_n, p_n^{\prime}\}, b_n = \text{min}\{p_n, p_n^{\prime}\}$ and $k_n^{\prime} = k_n + a_n$ and note that $k_n^{\prime} = O(n)$. 
Then with the usual notation $\theta_{j,n} = \mathbb{P}(M_{j,j+p_n} \leq x_n \mid X_j > x_n)$, we have for all $1\leq j \leq k_n$ that, for sufficiently large $n$,
$|\theta_{j,n}^{\prime} - \theta_{j,n}|  \leq \, \mathbb{P}(M_{j+b_n, j+a_n} > x_n \mid X_j > x_n)  \leq |p_n - p_n^{\prime}|\,\bar{F}(x_n) + \alpha^*_{k_n^{\prime},q_n}(x_n)$
where we have used the fact that $b_n > q_n$ for sufficiently large $n$ and $\alpha^*_{n,q}(u)$ is non-decreasing in $n$ for fixed $q$ and $u$.
Therefore, $|k_n^{-1}\sum_{j=1}^{k_n}\theta_{j,n} - k_n^{-1}\sum_{j=1}^{k_n}\theta_{j,n}^{\prime}| \leq |p_n - p_n^{\prime}|\,\bar{F}(x_n) + \alpha^*_{k_n^{\prime},q_n}(x_n)\to 0$ and so $k_n^{-1}\sum_{j=1}^{k_n}\theta_{j,n} $ and $k_n^{-1}\sum_{j=1}^{k_n}\theta_{j,n}^{\prime}$ have the same limit which equals $\gamma$ since
$|k_n^{-1}\sum_{j=1}^{k_n}\theta_{j,n} - k_n^{-1}\sum_{j=1}^{k_n}\theta_j| \leq \text{max}_{1\leq j\leq k_n}|\theta_j - \theta_{j,n}| \to 0.$

Finally, we have $\mathbb{P}(M_{i+k_n} \leq x_n) = \mathbb{P}(M_{k_n} \leq x_n)+ o(1) = F(x_n)^{k_n(\gamma + o(1))} = [e^{-t} + o(1)]^{\gamma + o(1)}$
since $n\bar{F}(x_n)\to \tau$ implies that $k_n\bar{F}(x_n)\to t$ which in turn implies that $F(x_n)^{k_n}\to e^{-t}$. 
Substituting $\mathbb{P}(M_{i+k_n} \leq x_n) = e^{-\gamma\,t} + o(1)$ in (\ref{LastEqn}) then gives the result.

\vspace{5mm} 
\textbf{Acknowledgements.} The authors would like to express their gratitude to an anonymous reviewer whose helpful comments have greatly improved this paper. 

\vspace{5mm} 
\textbf{Data availability.} The datasets generated and analysed during the current study are available from the corresponding author upon request.

{\small

}

\end{document}